\crefname{assumption}{assumption}{assumptions}
\crefname{thm}{theorem}{theorems}
\crefname{lem}{lemma}{lemmas}
\crefname{cor}{corollary}{corollaries}
\crefname{prop}{proposition}{propositions}
\Crefname{theorem}{Theorem}{Theorems}
\crefname{conjecture}{conjecture}{conjectures}
\newcommand{\R}{\mathbb{R}}
\renewcommand{\epsilon}{\varepsilon}
\newcommand{\eps}{\varepsilon}
\let\phi\varphi
\newtheorem{thm}{Theorem}[section]
\newtheorem{defi}{Definition}[section]
\newtheorem{prop}[thm]{Proposition}
\newtheorem{lem}[thm]{Lemma}
\newtheorem{definition}[thm]{Definition}
\newtheorem{proposition}[thm]{Proposition}
\newtheorem{example}[thm]{Example}
\newtheorem{rem}[thm]{Remark}
\newtheorem{theorem}[thm]{Theorem}
\newcommand{\NC}[1]{\textcolor{black}{#1}}
\date\today
\author{Emeric Bouin 
\footnote{CEREMADE - Universit\'e Paris-Dauphine, UMR CNRS 7534, Place du Mar\'echal de Lattre de Tassigny, 75775 Paris Cedex 16, France. E-mail: \texttt{bouin@ceremade.dauphine.fr}}\and
Nils Caillerie
\footnote{Institut Camille Jordan (ICJ), Université Claude Bernard Lyon 1, 43 boulevard du 11 novembre 1918, 69622 Villeurbanne Cedex, France. E-mail: \texttt{caillerie@math.univ-lyon1.fr}}
}
\begin{document}
\title{Spreading in kinetic reaction-transport equations \\in higher velocity dimensions}
\maketitle

\begin{abstract}
In this paper, we extend and complement previous works about propagation in kinetic reaction-transport equations. The model we study describes particles moving according to a velocity-jump process, and proliferating according to a reaction term of monostable type. We focus on the case of bounded velocities, having dimension higher than one. We extend previous results obtained by the first author with Calvez and Nadin in dimension one. We study the large time/large scale hyperbolic limit via an Hamilton-Jacobi framework together with the half-relaxed limits method. We deduce spreading results and the existence of travelling wave solutions. A crucial difference with the mono-dimensional case is the resolution of the spectral problem at the edge of the front, that yields potential singular velocity distributions. As a consequence, the minimal speed of propagation may not be determined by a first order condition.
\end{abstract}
\noindent{ \bf Key-words:}  Kinetic equations, travelling waves, dispersion relation\\
\noindent{\bf AMS Class. No:} {35Q92, 45K05, 35C07}

\section{Introduction}

\subsection*{The model.}

In this paper, we are interested in propagation phenomena occuring in the following reaction-transport equation
\begin{equation} \label{eq:main}
\begin{cases}
\partial_t f(t,x,v) + v \cdot \nabla_x f(t,x,v) = M(v) \rho(t,x) - f(t,x,v) + r\rho(t,x)  \left( M(v) - f(t,x,v) \right),\\ \hfill (t,x,v) \in \R_+ \times \R^n \times V\, ,&\smallskip\\
f(0,x,v) = f_0(x,v)\,, \hfill (x,v) \in   \R^n \times V\, ,
\end{cases}
\end{equation}
where $r>0$. The mesoscopic density $f$ depends on time $t\in\R^+$, position $x\in\R^n$ and velocity $v\in V$ and describes a population of individuals. The macroscopic density is $\rho(t,x) = \int_V f(t,x,v)\, dv$. The subset $V\subset \R^n$ is the set of all possible velocities. From now on, we assume
\begin{itemize}
\item[{\bf(H0)}]\label{H0} The velocity set $V \subset \R^n$ is compact.
\end{itemize}
For any given direction $e \in \mathbb{S}^{n-1}$, we define
\begin{equation*}
\overline{v}(e) = \max\left\lbrace v\cdot e, v\in V\right\rbrace, \qquad \mu(p) = \vert p \vert \overline{v}\left( \frac{p}{\vert p \vert} \right), \quad \mathrm{Arg}\,\mu(p) = \left\{ v\in V\mid v\cdot p = \mu(p)\right\}.
\end{equation*}
We set
\begin{equation*}
v_{max}:=\underset{v\in V}{\mathrm{sup}}\,|v|, \qquad |V|:=\int_V dv.
\end{equation*}

Individuals move following a so-called velocity-jump process. That is, they alternate successively a run phase, with velocity $v\in V$, and a change of velocity at rate 1, which we call the tumbling. The new velocity is chosen according to the probability distribution $M$. Throughout the paper, we assume 
\begin{itemize}
\item[{\bf(H1)}]\label{H1} $M\in L^1(V)$, and
\begin{equation}\label{eq:hypM}
\left\langle v \right\rangle_M:=\int_{V} vM(v)dv = 0.
\end{equation}
\end{itemize}
Note that it is challenging to replace the linear BGK operator $M\rho - f$ by a more general collision operator of the form $P(f)-\Sigma f$ where $P$ is a positive operator. 
However, to remain consistent with \cite{bouin_propagation_2015}, we will stick to their framework and leave this question for future work.
\begin{rem}
In fact, our analysis can easily be extended to the case $\left\langle v \right\rangle_M\in \R^n \setminus {0}$. Setting $\mathbb{V}:=V-\left\langle v \right\rangle_M$, $\mathbb{M}(w):=M(w+\left\langle v \right\rangle_M)$ and $\mathbb{F}(t,x,w):=f(t,x+\left\langle v \right\rangle_Mt,w+\left\langle v \right\rangle_M)$, for all $(t,x,w)\in\R_+\times\R^n\times \mathbb{V}$, we recover our assumptions in the new framework.
\end{rem}

The reproduction of individuals is taken into account through a reaction term of monostable type. The constant $r>0$ is the growth rate in absence of any saturation. New individuals start with a velocity chosen at random with the same probability distribution $M$. The quadratic saturation term accounts for local competition between individuals, regardless of their speed. 

We assume that initially $0 \leq f_0 \leq M$, so that this remains true for all times, see \cite{bouin_propagation_2015,cuesta_traveling_2012}.

\subsection*{Earlier works and related topics}
 
It is relatively natural to address the question of spreading for \eqref{eq:main} since there is a strong link between \eqref{eq:main} and the classical Fisher-KPP equation \cite{fisher_wave_1937,kolmogorov_etude_1937}. Indeed, a suitable parabolic rescaling 
\begin{equation} \label{eq:main2}
\epsilon^2 \partial_t g_\eps +  \epsilon  v \cdot  \nabla_x g_\eps  = \left(M(v) \rho_{g_\eps} - g_\eps\right) + \epsilon^2  r\rho_{g_\eps} \left( M(v) - g_\eps \right)\,,
\end{equation}
leads to the Fisher-KPP equation (see \cite{cuesta_traveling_2012} for example) in the limit $\eps \to 0$, 
\begin{align}\label{eq:kolmogorov_etude_1937}
&\partial_t \rho^0-  \left\langle v^2 \right\rangle_M\partial_{xx} \rho^0 = r \rho^0 \left( 1 - \rho^0 \right)\, ,\\
&g^0 := \lim_{\eps \to 0} g_\eps = M \rho^0 \nonumber,
\end{align}
assuming that the two following conditions on $M$ hold:
\begin{equation*}
\int_V vM(v)dv=0,\quad \left\langle v^2 \right\rangle_M:=\int_V v^2 M(v)dv>0.
\end{equation*}
We recall that for nonincreasing initial data decaying sufficiently fast at $x = +\infty$, the solution of \eqref{eq:kolmogorov_etude_1937} behaves asymptotically as a travelling front moving at the minimal speed $c^* = 2 \sqrt{r\left\langle v^2 \right\rangle_M}$  \cite{kolmogorov_etude_1937,aronson_multidimensional_1978}. 
However, even though the philosophy of the results will be the same in spirit, we emphasize that nothing related to this parabolic limit will be used in the present paper. Our argumentation does not rely on any perturbative analysis. Hence, we obtain results without any smallness assumption on the parameters. This will yield significant differences, regarding both the results and the methods of proof.

A short review of earlier results is now in order. Hadeler has worked on propagation for reaction-telegraph equations \cite{Hadeler_1988,Hadeler1999}, that can be seen as two-speeds kinetic models.
Morever, a similar type of result was obtained by Cuesta, Hittmeir and Schmeiser  \cite{cuesta_traveling_2012} in the diffusive regime (\em i.e. \em for sufficiently small $\eps$ in \eqref{eq:main}). Using a micro-macro decomposition, they constructed possibly  oscillatory travelling waves of speed $c\geq 2\sqrt{rD}$ for $\epsilon$ small enough (depending on $c$). In addition, when the set of admissible speeds $V$ is bounded, $c> 2\sqrt{rD}$, and $\epsilon$ is small enough, they prove that the travelling wave constructed in this way is indeed nonnegative.

Propagation for the full kinetic model \eqref{eq:main} has then been investigated by the first author with Calvez and Nadin in \cite{bouin_propagation_2015}. In one dimension of velocities, and when the velocities are bounded, they proved the existence and stability of travelling waves solutions to \eqref{eq:main}. The minimal speed of propagation of the waves is determined by the resolution of a spectral problem in the velocity variable. In particular, it is not related with the KPP speed, except that the speeds coincide in the diffusive regime. It is worth mentioning that the case of unbounded velocities is significantly different as the front spreads with arbitrarily large speed \cite{bouin_propagation_2015}. This case shall not be discussed further in this paper. This phenomenon was newly appearing for this type of equations and unexpected from the macroscopic limit. One aim of this paper is to extend the construction of travelling waves solutions to any velocity dimension, which was left open after \cite{bouin_propagation_2015}.

There is a strong link between this KPP type propagation phenomena and large deviations for the underlying velocity-jump process. Indeed, it is well known that fronts in Fisher-KPP equations are so-called \textit{pulled fronts}, that is, are triggered by very small populations at the edge that are able to reproduce almost exponentially. Thus, studying large deviations for these type of processes at the kinetic level is an interesting problem in itself. In \cite{bouin_kinetic_2012,bouin_hamilton-jacobi_2015}, the authors have combined Hamilton-Jacobi equations and kinetic equations to study large deviations (and propagation) from a PDE point of view. These works show that he asymptotics of large deviations in the kinetic equation do not coincide with the asymptotic of large deviations obtained after a diffusive approximation.

As a side note, the Hamilton-Jacobi technique (that will be described in the next subsection) has also much been used recently to study long time dynamics in all sorts of stuctured models. An interested reader could describe the evolution of dominant phenotypical traits in a given population \NC{reading  \cite{barles_concentration_2009,lorz_dirac_2011,bouin_hamiltonjacobi_2015}} and the references therein), study different adaptative dynamics issues \cite{diekmann_dynamics_2005}, describe propagation in reaction-diffusion models of kinetic types \cite{bouin_invasion_2012} but also in age renewal equations\cite{calvez_limiting_2016}. This approach has also recently been used to study large deviations of velocity jump-processes\cite{bouin_kinetic_2012,bouin_large_2016,caillerie_large_2017} or slow-fast systems \cite{bressloff_path_2014,bressloff_hamiltonian_2014,faggionato_averaging_2010,kifer_large_2009,perthame_asymmetric_2009}.

\subsection*{The Hamilton-Jacobi limit}

After the seminal paper by Evans and Souganidis \cite{freidlin_functional_1985,evans_pde_1989}, an important technique to derive the propagating behavior in reaction-diffusion equations is to revisit the WKB expansion to study hyperbolic limits. We will directly present the technique on our problem for conciseness but one can find the original framework for the Fisher-KPP equation in \cite{evans_pde_1989} and complements in \cite{barles_solutions_1994,barles_wavefront_1990,souganidis_front_1997,crandall_users_1992}.

We perform the hyperbolic scaling $\left( t,x,v \right) \to \left( \frac{t}{\eps} , \frac{x}{\eps} ,v \right)$ in \eqref{eq:main}. Importantly, the velocity variable is not rescaled (it cannot be rescaled since it lies in a bounded set). The \textit{kinetic Hopf-Cole transformation} (already used in \cite{bouin_kinetic_2012,caillerie_large_2017}) is written 
\begin{equation}\label{eq:HopfColetransform}
\forall (t,x,v) \in \R^+ \times \R^n \times V, \qquad f^{\eps}(t,x,v) = M(v) e^{-\frac{\varphi^{\eps}(t,x,v)}{\eps}}.
\end{equation}

Thanks to the maximum principle \cite{cuesta_traveling_2012}, $\varphi^{\eps}$ is well defined and remains nonnegative for all times.
Plugging \eqref{eq:HopfColetransform} in \eqref{eq:main}, one obtains the following equation for $\varphi^{\eps}$:
\begin{equation}\label{eq:mainHJeps}
\partial_t \varphi^{\eps} + v \cdot \nabla_x \varphi^{\eps} + r = (1+r)\int_{V} M(v') \left( 1-e^{\frac{\varphi^\eps(v) - \varphi^\eps(v')}{\eps}} \right) dv'  +  r \rho^{\eps}.
\end{equation}

Our aim is to pass to the limit in \eqref{eq:mainHJeps}. To make the convergence result appear naturally, we shall start by providing formal arguments. Assuming Lipschitz bounds on $\varphi^\eps$, and since $\rho^\eps$ is uniformly bounded, the boundedness of $\int_{V} M(v') ( 1- \exp((\varphi^\eps(v) - \varphi^\eps(v'))/\eps) dv'$ implies that we expect the limit $\varphi^0$ to be independent of $v$. To identify the limit $\varphi^0$, we shall thus perform the following expansion
\begin{equation}\label{eq:WKBansatz}
\varphi^{\eps}(t,x,v)=\varphi^0(t,x)+\eps\eta(t,x,v).
\end{equation}
Plugging the latter into \eqref{eq:WKBansatz} yields
\begin{equation*}
\partial_t \varphi^0 + v\cdot\nabla_x \varphi^0+r=(1+r)\int_V M(v')\left(1-e^{\eta(v)-\eta(v')}\right)dv' + re^{-\frac{\varphi^0}{\eps}}\int_V e^{-\eta(v')}dv'.
\end{equation*}
%
As a consequence, for any $(t,x) \in \left\{ \varphi^0>0\right\}$, we have
\begin{equation}\label{eq:spectralpb1}
\partial_t \varphi^0 + v\cdot\nabla_x \varphi^0=1-e^{\eta(v)}(1+r)\int_V M(v')e^{-\eta(v')}dv'.
\end{equation}
One should read this equation as an eigenvalue problem in the velocity variable. Indeed, setting 
\begin{equation*}
p(t,x) = \nabla_x \varphi^0(t,x), \qquad \eta(t,x,v) := - \ln \left(\frac{Q_{p(t,x)}}{M(v)}\right), \qquad H(p(t,x)):=-\partial_t \varphi^0(t,x),
\end{equation*}
we see that $(H,Q)$ are the principal eigenelements of the following spectral problem
\begin{equation*}
(1+r)M(v) \int_V Q_p(v') \, dv' -  \left( 1 - v \cdot p  \right) Q_p(v) = H(p) Q_p(v).
\end{equation*}
The dependency with respect to $r$ can be identified by setting $p':=\frac{p}{1+r}$, $\mathcal{H}(\cdot):=\frac{H((1+r)\cdot)-r}{1+r}$ and $\widetilde{Q}_{p'}=Q_p$. Indeed, we have then that $\partial_t \varphi^0 + (r+1)\mathcal{H}(\frac{p}{r+1})+r = 0$ and the Hamiltonian $\mathcal{H}$ is given by 
\begin{equation}\label{eq:spectralproblem}
\left( 1+ \mathcal{H}\left(p'\right) - v \cdot p'  \right) \widetilde{Q}_{p'}(v) = M(v) \int_V \widetilde{Q}_{p'}(v') \, dv'.
\end{equation}

After these heuristics, we are now ready to define properly the Hamiltonian $\mathcal{H}$ involved. 
\begin{definition}\label{def:hamiltonian}
We define, for $e \in \mathbb{S}^{n-1}$,
\begin{equation*}
l(e) = \int_V \frac{M(v)}{\overline{v}(e)-v \cdot e}dv.
\end{equation*}
The so-called singular set is defined by
\begin{equation}\label{eq:Sing}
\mathrm{Sing}\left(M\right):=\left\{p\in\mathbb{R}^n,  \int_V \frac{M(v)}{\mu(p)-v\cdot p}dv\leq 1\right\} =\left\{p\in\mathbb{R}^n, \, l\left( \frac{p}{\vert p \vert } \right) \leq \vert p \vert \right\}.
\end{equation}
Then, the Hamiltonian $\mathcal{H}$ involved in this paper is given as follows:
\begin{itemize}
\item If $p\notin \mathrm{Sing}\left(M\right)$, then $\mathcal{H}$ is uniquely defined by the following implicit relation :
\begin{equation}\label{eq:defHimplicit}
\int_V \frac{M(v)}{1+ \mathcal{H}(p) -v\cdot p}dv=1,
\end{equation}
\item else, $\mathcal{H}(p)=  \mu\left(p \right)- 1 $.
\end{itemize}
\end{definition}

The relevancy of such a definition, \textit{i.e.} the resolution of \eqref{eq:spectralproblem}, will be discussed in \Cref{sec:HJ} below. With this definition in hand, the convergence result for the sequence of functions $\varphi^\eps$ is as follows.

\begin{theorem}\label{thm:HJlimit}
Suppose that (H0) and (H1) hold, and that the initial data satisfies 
\begin{equation*}
\forall (x,v) \in \R^n \times V, \qquad \varphi^{\eps}(0,x,v) = \varphi_0(x,v).
\end{equation*}
Then, $\left( \varphi^{\eps} \right)_\eps$ converges uniformly on all compacts of $\mathbb{R}_+^*\times \mathbb{R}^n\times V$ towards $\varphi^0$, where $\varphi^0$ does not depend on $v$. Moreover $\varphi^0$ is the unique viscosity solution of the following Hamilton-Jacobi equation:
\begin{equation}\label{eq:varHJ}
\begin{cases}
\min\left \lbrace \partial_t \varphi^0 + (1+r) \mathcal{H} \left(\frac{\nabla_x \varphi^0}{1+r} \right) + r , \varphi^0 \right\rbrace = 0, & \qquad  (t,x) \in \R_+^* \times \R^n, \medskip \\
\varphi^0(0,x)= \underset{v\in V}{\mathrm{min}}\,\varphi_0(x,v),& \qquad x \in \R^n.
\end{cases}
\end{equation}
\end{theorem}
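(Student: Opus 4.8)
The plan is to use the Barles--Perthame method of half-relaxed limits, the main new difficulty being the analysis of the nonlocal term in \eqref{eq:mainHJeps} that governs the $v$-dependence, especially on the singular set. The first step is to establish $\eps$-uniform a priori estimates on $\varphi^\eps$. Nonnegativity is immediate from $0 \le f^\eps \le M$ through the Hopf--Cole transform \eqref{eq:HopfColetransform}. For the remaining bounds I would construct explicit affine-in-$(t,x)$ super- and subsolutions of \eqref{eq:mainHJeps}, exploiting the compactness of $V$ and the uniform bound on $\rho^\eps$, and invoke the comparison principle for \eqref{eq:mainHJeps} to deduce a locally uniform $L^\infty$ bound together with uniform Lipschitz estimates in $(t,x)$. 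These bounds guarantee that the half-relaxed limits
\begin{equation*}
\overline{\varphi}(t,x,v) = \limsup_{\substack{\eps \to 0 \\ (s,y,w)\to(t,x,v)}} \varphi^\eps(s,y,w), \qquad \underline{\varphi}(t,x,v) = \liminf_{\substack{\eps \to 0 \\ (s,y,w)\to(t,x,v)}} \varphi^\eps(s,y,w)
\end{equation*}
are finite, upper/lower semicontinuous respectively, and satisfy $\underline{\varphi} \le \overline{\varphi}$.

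The second step is to show that both limits are independent of $v$. The key is that, along \eqref{eq:mainHJeps}, the quantity $\int_V M(v')\, e^{(\varphi^\eps(v)-\varphi^\eps(v'))/\eps}\,dv'$ stays locally bounded because every other term in the equation is bounded. Since $M \ge 0$ has positive mass, this prevents $\varphi^\eps(\cdot,\cdot,v)$ from exceeding its essential $v$-infimum by more than $O(\eps)$ on compact sets, and passing to the limit removes the velocity dependence of $\overline{\varphi}$ and $\underline{\varphi}$, which I will henceforth regard as functions of $(t,x)$ alone. This matches the consistency requirement that the limit initial datum be $\min_{v\in V}\varphi_0(x,v)$.

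The heart of the proof is the third step: proving that $\overline{\varphi}$ is a viscosity subsolution and $\underline{\varphi}$ a viscosity supersolution of \eqref{eq:varHJ}. Given a smooth test function $\psi(t,x)$ touching $\overline{\varphi}$ from above at a point $(t_0,x_0)$ where $\overline{\varphi}>0$, and writing $p_0 = \nabla_x\psi(t_0,x_0)/(1+r)$, I would localize near the maximum of $\varphi^\eps - \psi$ and pass to the limit in the nonlocal term; in the region $\{\varphi^0>0\}$ the reaction contribution $r\rho^\eps = r\,e^{-\varphi^0/\eps}\int_V e^{-\eta}\,dv'$ vanishes, leaving the pure spectral relation. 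When $p_0 \notin \mathrm{Sing}(M)$, the formal corrector $\eta$ solving \eqref{eq:spectralproblem} is a genuine integrable eigenfunction, so inserting the test function $\psi(t,x)+\eps\eta(v)$ into \eqref{eq:mainHJeps} reproduces the implicit relation \eqref{eq:defHimplicit} and yields $\partial_t\psi + (1+r)\mathcal{H}(p_0)+r \le 0$; the obstacle constraint $\overline{\varphi}\ge 0$ is handled by the usual dichotomy. The supersolution inequality for $\underline{\varphi}$ follows symmetrically by testing from below.

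The final step is to invoke a comparison principle for the limiting problem \eqref{eq:varHJ}, valid once the continuity and convexity of $\mathcal{H}$ established in Section~\ref{sec:HJ} are in hand, which gives $\overline{\varphi} \le \underline{\varphi}$; combined with the trivial reverse inequality this forces $\overline{\varphi}=\underline{\varphi}=:\varphi^0$, a continuous viscosity solution of \eqref{eq:varHJ}, and the equality of the two half-relaxed limits is precisely locally uniform convergence. I expect the main obstacle to be the subsolution inequality when $p_0\in\mathrm{Sing}(M)$: there the eigenvalue problem \eqref{eq:spectralproblem} admits no integrable solution, the formal corrector $\eta$ blowing up near the velocities in $\mathrm{Arg}\,\mu(p_0)$, so the corrector construction fails outright. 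One must instead estimate the nonlocal term directly and show that the degenerate value $\mathcal{H}(p_0)=\mu(p_0)-1$ is the correct one, for instance by using truncated approximate correctors and carefully controlling the contribution of the velocities that concentrate on $\mathrm{Arg}\,\mu(p_0)$ as the truncation is removed.
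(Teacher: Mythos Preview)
Your overall strategy (half-relaxed limits, perturbed test functions, comparison principle) matches the paper, but you have the key asymmetry backwards in two places, and this matters.

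First, you assert that both $\overline{\varphi}$ and $\underline{\varphi}$ are independent of $v$. The argument you sketch---boundedness of the nonlocal term forces $\varphi^\eps(\cdot,\cdot,v)$ to stay within $O(\eps)$ of its $v$-infimum---works only at \emph{maximum} points of $\varphi^\eps-\psi$, because it is there that the equation gives an \emph{upper} bound on $\int_V M(v')e^{(\varphi^\eps(v)-\varphi^\eps(v'))/\eps}dv'$, which via Jensen and Fatou yields $\varphi^*(v^0)\le \inf_V\varphi^*$. At a minimum point the inequalities reverse and you get no such information. The paper accordingly proves $v$-independence only for $\varphi^*$, and for the lower limit works instead with $(t,x)\mapsto \min_{w\in V}\varphi_*(t,x,w)$ as the candidate supersolution. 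This is not a cosmetic difference: the $v$-independence of $\varphi^*$ is what allows one, in the singular subsolution case, to freeze the velocity at a specific $v^*$.

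Second, and more importantly, you have the singular-case difficulty on the wrong side. For the \emph{subsolution} when $p_0\in\mathrm{Sing}(M)$, no corrector is needed at all: since $\varphi^*$ is constant in $v$, pick any $v^*\in\mathrm{Arg}\,\mu(p_0)$, look at local maxima of $(t,x)\mapsto \varphi^\eps(t,x,v^*)-\psi(t,x)$, and use the trivial bound $\int_V M(v')(1-e^{(\varphi^\eps(v^*)-\varphi^\eps(v'))/\eps})dv' \le 1$ together with $v^*\cdot\nabla_x\psi=\mu(\nabla_x\psi)$ to get $\partial_t\psi+\mu(\nabla_x\psi)\le 1$ directly. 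The truncated corrector $\eta_\delta=\max(\eta,-1/\delta)$ is needed for the \emph{supersolution}, precisely because at a minimum point one needs a lower bound on the nonlocal term, and $\eta_\delta\ge\eta$ gives $\int_V M e^{-\eta_\delta}\le 1$; one then sends $\delta\to 0$ and shows that if the limiting velocity lies in $\mathrm{Arg}\,\mu(p_0)$ then $e^{\eta_\delta(v_\delta^0)}\to 0$, recovering $\partial_t\psi+\mu(\nabla_x\psi)\ge 1$. Your sentence ``the supersolution inequality follows symmetrically'' is therefore where the real gap lies.
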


Let us now emphasize the differences between the result presented here and the very related works \cite{bouin_kinetic_2012,bouin_hamilton-jacobi_2015,caillerie_large_2017}. First, the results from \cite{bouin_kinetic_2012} and \cite{bouin_hamilton-jacobi_2015} only hold for $n=1$ and for $M\geq \delta>0$. In \cite{bouin_hamilton-jacobi_2015}, the first author successfully proved a convergence result in the case $r>0$. It is worth mentioning that a much wider class of collision operators was considered in \cite{bouin_hamilton-jacobi_2015}, but under the condition of existence of a $L^1$ eigenvector. We believe that the ideas of the present work could be used there, but with technicalities inherent from the spectral problem that would require a special study.

As explained before, the multidimensional case ($n>1$) is more delicate since the relation \eqref{eq:defHimplicit} may not have a solution. We refer to our \Cref{ex:counterexample} for a situation where this happens. In \cite{caillerie_large_2017}, the second author generalized the convergence result of \cite{bouin_kinetic_2012} in the multidimensional case, with no reaction term. However, the proof we design in this paper is simpler and more adaptable.  For this we manage to use the half-relaxed limits of Barles and Perthame \cite{barles_exit_1988} in the spirit of \cite{bouin_hamiltonjacobi_2015}. 
%
%
We point out that an asymptotic preserving scheme has been developed by Hivert in \cite{hivert_asymptotic_2017} to numerically solve \eqref{eq:mainHJeps} using the Hamilton-Jacobi framework developed in \cite{bouin_hamilton-jacobi_2015}.
We present the proof of \Cref{thm:HJlimit} in \Cref{sec:HJ} below.

\subsection*{Travelling wave solutions and spreading of planar like initial data}

We then investigate the existence of travelling wave solutions of \eqref{eq:main}. As in the mono-dimensional case treated in \cite{bouin_propagation_2015}, we will prove that there exists a minimal speed $c^*$ for which travelling wave solutions exist. We will use the following definition throughout the paper. 

\begin{defi}\label{def:deftw}
A function $f$ is a travelling wave solution of speed $c \in \R_+$ and direction $e\in\mathbb{S}^{n-1}$ of equation \eqref{eq:main} if it can be written $f(t,x,v) = \tilde f \left( x \cdot e - ct , v \right)$, where  the profile  \NC{$\tilde f \in \mathcal{C}^2 \left( \R,  L^1(V) \right)$} solves 
\begin{equation}
\left( v \cdot e - c \right) \partial_\xi \tilde f = M(v) \tilde\rho - \tilde f + r \tilde\rho  \left( M(v) - \tilde f\right) 
\end{equation}
and satisfies 
\begin{equation} \label{eq:deftw}\forall (z,v) \in\R\times V\,, \quad  0\leq \tilde f(z,v)\leq M(v)\,, \quad \lim_{z\to -\infty} \tilde f(z,v)=M(v)\,, \quad \lim_{z\to +\infty} \tilde f(z,v)=0\;.
\end{equation}
\end{defi}

It is well known for this kind of Fisher-KPP type problems that propagation fronts are so-called pulled fronts, that is the speed of propagation is given by seeking exponentially decaying solutions of the linearized problem in a moving frame. As a consequence, for any $\lambda > 0$, one can define $c(\lambda,e)$ using the spectral problem solved in \Cref{def:hamiltonian}. Indeed, we set 
\begin{equation}\label{eq:linspeed}
c(\lambda,e) = (1+r)\mathcal{H} \left(\frac{\lambda e}{1+r} \right) + r.
\end{equation}
Then we have the formula for the minimal speed in the direction $e \in \mathbb{S}^{n-1}$.
\[ c^*(e)= \inf_{\lambda>0} c(\lambda,e)\, .  \]
We obtain the following existence result. 
\begin{thm} \label{thm:existence-tw} 
Let $e\in \mathbb{S}^{n-1}$. For all $c \in [ c^*(e) , \overline{v}(e) )$,  there exists a travelling wave solution of \eqref{eq:main} with speed $c$ and direction $e$. Moreover, there exists no positive travelling wave solution of speed $c\in [0,c^*(e))$.
\end{thm}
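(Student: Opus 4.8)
The plan is to split \Cref{thm:existence-tw} into an existence half and a non-existence half, and to treat existence by the classical front-edge linearization together with a sub-/super-solution construction on truncated slabs, followed by a limit $R\to\infty$. Throughout I work in the moving frame $\xi = x\cdot e - ct$, in which the profile equation of \Cref{def:deftw} reads
\[ (v\cdot e - c)\,\partial_\xi \tilde f = (1+r)M(v)\tilde\rho - \bigl(1 + r\tilde\rho\bigr)\tilde f . \]
The boundary conditions \eqref{eq:deftw} pin the unstable state $\tilde f\equiv 0$ at $+\infty$ and the stable state $\tilde f\equiv M$ at $-\infty$, so the whole difficulty is concentrated at the front edge $\xi\to+\infty$.

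\textbf{The front edge and the super-solution.} First I would linearize at $\tilde f\equiv 0$ and insert the ansatz $\tilde f(\xi,v)=Q(v)e^{-\lambda\xi}$ with $\lambda>0$. This turns the tail problem into the eigenvalue problem underlying \Cref{def:hamiltonian}, namely the existence of a nonnegative eigenvector solving $\bigl(1+\lambda(c - v\cdot e)\bigr)Q(v) = (1+r)M(v)\int_V Q\,dv$; integrating in $v$ gives the dispersion relation $\int_V (1+r)M(v)\bigl(1+\lambda(c - v\cdot e)\bigr)^{-1}dv = 1$, which is exactly \eqref{eq:spectralproblem} after the $r$-rescaling of \Cref{def:hamiltonian} and whose solvability is governed by the speed formula \eqref{eq:linspeed}. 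The positivity requirement $1+\lambda(c - v\cdot e)>0$ on $V$ forces $c<\overline v(e)$ — no front outruns the fastest particles — and identifies $c^*(e)=\inf_\lambda c(\lambda,e)$ as the exact threshold: for $c\in(c^*(e),\overline v(e))$ there are two admissible rates $0<\lambda_-<\lambda_+$ with positive eigenfunctions, giving the super-solution $\overline f(\xi,v)=\min\{M(v),\,Q_{\lambda_-}(v)e^{-\lambda_-\xi}\}$ and the companion sub-solution $\underline f = \max\{0,\,Q_{\lambda_-}e^{-\lambda_-\xi} - B\,Q_{\lambda'}e^{-\lambda'\xi}\}$ for a suitable $\lambda'\in(\lambda_-,\lambda_+]$ and $B$ large.

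\textbf{Construction and passage to the limit.} With these barriers I would solve the profile equation on a truncated slab $\xi\in[-R,R]$. Since the transport coefficient $v\cdot e - c$ changes sign, boundary data must be prescribed only on incoming velocities ($\tilde f = M$ on $\{v\cdot e>c\}$ at $\xi=-R$, and $\tilde f=0$ on $\{v\cdot e<c\}$ at $\xi=R$). A Leray--Schauder degree (or Schauder fixed-point) argument applied to the map $\tilde\rho\mapsto\tilde f\mapsto\int_V\tilde f\,dv$ then yields a truncated solution, with the uniform bound $0\le\tilde f\le M$ inherited from the order-preserving structure of the BGK operator; a normalization such as $\tilde\rho(0)=\tfrac12$ rules out the trivial solution. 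Letting $R\to\infty$, the transport equation provides $\xi$-regularity and $L^1(V)$-compactness in velocity, so a translated limit solves the profile equation on $\R$, and the limits at $\pm\infty$ follow by squeezing between $\underline f$ and $\overline f$ at $+\infty$ and by a sliding/Lyapunov argument at $-\infty$, where $M$ is the only admissible nonnegative state with $\tilde\rho>0$. For the non-existence half, if a positive wave of speed $c\in[0,c^*(e))$ existed, a Laplace-transform analysis of its tail at $+\infty$ would force the decay rate to be a real positive root of the same dispersion relation; since none exists below $c^*(e)$ the tail must oscillate and change sign, contradicting $\tilde f\ge0$.

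\textbf{Main obstacle.} The genuinely multidimensional and hardest point is the endpoint $c=c^*(e)$. As announced after \Cref{def:hamiltonian}, the minimal speed need not be selected by the first-order condition $\partial_\lambda c(\lambda,e)=0$: the optimal rate may sit on the boundary of the singular set \eqref{eq:Sing}, where \eqref{eq:defHimplicit} degenerates and the eigen-element $Q_\lambda$ is no longer an $L^1$ density but concentrates as a singular measure on $\mathrm{Arg}\,\mu(p)$. A smooth positive super-solution at the minimal speed is therefore unavailable, and I expect to reach $c=c^*(e)$ only by passing to the limit $c\downarrow c^*(e)$ in the supercritical waves already built, using the uniform bound $0\le\tilde f\le M$ together with tight control of the surviving decay as $Q_\lambda$ concentrates. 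Taming this degeneration — rather than the truncation or comparison steps, which are by now standard — is where the real work lies.
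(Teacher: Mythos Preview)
Your existence argument is essentially the paper's: linearize at the front edge, build sub-/super-solutions from the eigenvectors $F_{\lambda,e}$ for $c\in(c^*(e),\overline v(e))$, and reach the critical case by a limit $c\downarrow c^*(e)$ from the supercritical waves. One correction to your ``main obstacle'' paragraph: the minimizer $\lambda^*(e)$ always lies in $(0,\tilde\lambda(e)]$, and at $\lambda=\tilde\lambda(e)$ the eigenvector $F_{\tilde\lambda(e),e}(v)=(1+r)M(v)/\bigl(\tilde\lambda(e)(\overline v(e)-v\cdot e)\bigr)$ is still an $L^1$ density with unit mass (by the very definition of $\tilde\lambda(e)$). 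The singular measures supported on $\mathrm{Arg}\,\mu(p)$ appear only for $\lambda>\tilde\lambda(e)$, strictly beyond the minimum, so they do not obstruct the construction at $c^*(e)$.

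The genuine gap is in your non-existence half, and you have placed the main obstacle in the wrong spot. Your sketch---Laplace-transform the tail, force the decay rate to be a real positive root of the dispersion relation, and infer oscillation/sign change when none exists---is precisely the argument the paper singles out as \emph{not applicable} here. That route (used in the one-dimensional predecessor via the Rouch\'e theorem) needs the map $\lambda\mapsto\int_V (1+r)M(v)\bigl(1+\lambda(c-v\cdot e)\bigr)^{-1}dv$ to be analytic in a neighbourhood of $\lambda^*(e)$; when $\lambda^*(e)=\tilde\lambda(e)$ sits on $\partial\,\mathrm{Sing}(M)$ this analyticity breaks down and no complex root can be manufactured to force a sign change. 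The paper's replacement is the Hamilton--Jacobi machinery of \Cref{thm:HJlimit}, \Cref{prop:zones} and \Cref{prop:nullsetfront}: given a hypothetical wave of speed $c$, rescale hyperbolically, slip a front-like indicator datum beneath it, apply the comparison principle, and read off $c\ge c^*(e)$ from the speed of the nullset of the limiting phase. In short, the step you treat as routine is the paper's actual novelty, while the endpoint $c=c^*(e)$ you flag as the crux is handled by the standard supercritical-to-critical limit.
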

%

Following very closely the proof used in the mono-dimensional case, we shall prove this Theorem using sub and super-solution and a comparison principle satisfied by \eqref{eq:main}. We shall construct these sub- and super-solution using travelling wave solutions of the linearized problem. The main difference concerning the travelling wave result is the way we prove the minimality of the speed $c^*(e)$. Indeed, it might happen that $c(\lambda,e)$ is singular at its minimum $\lambda^*$ so that one can not reproduce the same argument as for the mono-dimensional case used in \cite{bouin_propagation_2015}, that was based on the Rouch\'e Theorem. Using the Hamilton-Jacobi framework above, in a similar fashion as in \cite{berestycki_spreading_2012 for the Fisher-KPP equation in an heterogeneous media}, we prove the following result.

\begin{prop} \label{prop:spreadingplanar}
Let $f_0$ be a non-zero initial data, compactly supported in some direction $e_0$, such that there exists $\gamma < 1$ such that 
\begin{equation*}
\gamma M(v) \textbf{1}_{[-x_m,x_m]\cdot e_0 + e_0^\bot}(x) \leq f_0 (x,v)\leq M(v) \textbf{1}_{[-x_M,x_M]\cdot e_0 + e_0^\bot}(x),
\end{equation*}
for all $(x,v) \in \R^n \times V$. Let $f$ be the solution of the Cauchy problem \eqref{eq:main} associated to this initial data. Then we have 
\begin{equation}\label{eq:frontplanar}
\lim_{t \to +\infty} \sup_{x\cdot e_0 > ct} \rho(t,x) = 0,\text{ if } c > c^*(e_0),
\end{equation}
\begin{equation}\label{eq:backplanar}
\lim_{t\to +\infty} f(t,e_0^\bot + c t e_0,v) = M(v) \,,\text{ if } c < c^*(e_0),
\end{equation}
uniformly in $v \in V$.
\end{prop}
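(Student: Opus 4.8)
The plan is to deduce the spreading result from the Hamilton--Jacobi convergence \Cref{thm:HJlimit} by exploiting the relationship between the zero set of the limit $\varphi^0$ and the support of the macroscopic density $\rho$. Since the initial data $f_0$ is squeezed between multiples of $M(v)$ times indicators of slabs in the direction $e_0$, after the kinetic Hopf--Cole transform \eqref{eq:HopfColetransform} the initial phase $\varphi^\eps(0,x,v)$ is, up to $O(\eps)$ corrections, governed by whether $x\cdot e_0$ lies in the slab: it is $O(\eps)$ inside and $+\infty$-like (large) outside. Concretely, I would apply \Cref{thm:HJlimit} to the rescaled solution $f^\eps(t,x,v) = f(t/\eps, x/\eps, v)$, for which the initial phase $\varphi_0$ satisfies $\min_v \varphi_0(x,v) = 0$ for $x\cdot e_0 \le 0$ and $=+\infty$ otherwise (after sending the slab width to a half-space by the compact-support hypothesis, noting spreading is a large-time feature insensitive to the initial slab width). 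Then $\varphi^0$ is the unique viscosity solution of \eqref{eq:varHJ} with this front-like initial datum.

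The next step is to identify the zero set $\{\varphi^0 = 0\}$ explicitly. Because the problem is invariant under translations in the directions orthogonal to $e_0$, the solution $\varphi^0(t,x)$ depends only on $(t, x\cdot e_0)$, reducing \eqref{eq:varHJ} to a one-dimensional obstacle Hamilton--Jacobi problem with Hamiltonian $p \mapsto (1+r)\mathcal{H}(pe_0/(1+r)) + r$. For such KPP-type fronts the standard computation (as in \cite{evans_pde_1989,freidlin_functional_1985}) gives that the zero set is the light cone $\{x\cdot e_0 \le c^*(e_0)\,t\}$, where the edge speed is the Legendre-type infimum
\begin{equation*}
c^*(e_0) = \inf_{\lambda > 0} \frac{(1+r)\mathcal{H}(\lambda e_0/(1+r)) + r}{\lambda} = \inf_{\lambda>0} \frac{c(\lambda,e_0)}{\lambda}\cdot\lambda,
\end{equation*}
matching the definition $c^*(e_0) = \inf_{\lambda>0} c(\lambda,e_0)$ via \eqref{eq:linspeed} after the front-speed computation; here I would verify that the control-theoretic representation of $\varphi^0$ yields exactly $\varphi^0(t,x) = 0$ iff $x\cdot e_0 \le c^*(e_0)\,t$ and $\varphi^0 > 0$ strictly beyond. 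The convexity and monotonicity properties of $\mathcal{H}$ established around \Cref{def:hamiltonian} guarantee this infimum is the relevant front speed even when the minimizer $\lambda^*$ is singular.

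Finally, I would translate the two statements about $\varphi^0$ back into statements about $f$ and $\rho$. For \eqref{eq:frontplanar}: if $c > c^*(e_0)$, then for $x\cdot e_0 > ct$ the point $(t, x)$ lies strictly in $\{\varphi^0 > 0\}$, so $\varphi^0(t,x) \ge \delta > 0$ on the relevant region; by uniform convergence $\varphi^\eps \ge \delta/2$ there, whence $f^\eps = M e^{-\varphi^\eps/\eps} \to 0$, giving $\rho \to 0$ uniformly in the leading region after undoing the rescaling. For \eqref{eq:backplanar}: if $c < c^*(e_0)$, the trailing point lies in the interior of $\{\varphi^0 = 0\}$; here $\varphi^0 = 0$ only forces $f$ away from $0$, so to conclude $f \to M$ I would invoke the monostable structure and a comparison argument showing that once $\rho$ is bounded below it saturates to the stable state $M$, using $0 \le f \le M$ and the reaction term's bistable-free (KPP) monotonicity. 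I expect the main obstacle to be this last point: the Hamilton--Jacobi limit only detects the boundary between $\{\rho \to 0\}$ and $\{\rho \text{ bounded below}\}$, and it does not by itself give convergence to the full state $M$ behind the front. Bridging from "$\rho$ bounded away from $0$" to "$f \to M$ uniformly in $v$" requires a separate stability/relaxation argument for \eqref{eq:main}, presumably via sub-solutions built from the travelling waves of \Cref{thm:existence-tw} together with the comparison principle.
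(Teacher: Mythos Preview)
Your overall strategy matches the paper's for the back part \eqref{eq:backplanar}, but diverges for the front part \eqref{eq:frontplanar}, and your stated ``main obstacle'' is in fact not an obstacle at all.

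For \eqref{eq:frontplanar}, the paper does \emph{not} go through the Hamilton--Jacobi limit. It simply invokes the comparison principle against the explicit super-solution coming from the linearized problem,
\[
f(t,x,v) \leq \min\bigl\{M(v),\, e^{-\lambda^*(e_0)(x\cdot e_0 - c^*(e_0)t)} F_{\lambda^*(e_0),e_0}(v)\bigr\},
\]
and integrates in $v$ to get $\rho(t,x)\leq e^{-\lambda^*(e_0)(x\cdot e_0 - c^*(e_0)t)}$, from which the uniform decay for $x\cdot e_0 > ct$ with $c>c^*(e_0)$ is immediate. Your route via $\varphi^0>0$ on the leading region would also work, but it is heavier and requires being careful about uniformity; the super-solution argument is a one-liner.

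For \eqref{eq:backplanar}, the paper does exactly what you outline up to the identification of the zero set (your reduction to a one-dimensional obstacle problem and the computation $\{\varphi^0=0\}=\{x\cdot e_0\leq c^*(e_0)t\}$ is precisely \Cref{prop:nullsetfront}). However, your final paragraph misidentifies the difficulty. You write that on $\mathrm{Int}\{\varphi^0=0\}$ the Hamilton--Jacobi limit ``only forces $f$ away from $0$'' and that one needs a separate relaxation argument to upgrade to $f\to M$. This is not so: the paper proves, as \Cref{prop:zones}, that on compact subsets of $\mathrm{Int}\{\varphi^0=0\}$ one has directly $\lim_{\eps\to 0}\rho^\eps=1$ and $\lim_{\eps\to 0}f^\eps(\cdot,v)=M(v)$. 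The mechanism is that at a maximum point of $\varphi^\eps-\psi^0$ (with $\psi^0$ a quadratic test function touching $\varphi^0=0$ from above), the equation forces $r\rho^\eps\geq r+o(1)$, hence $\rho^\eps\to 1$; a second look at the equation then gives $\varphi^\eps/\eps\to 0$ at the maximum point, which transfers to $f^\eps\to M$ pointwise. So the ``bridge'' you anticipate building via sub-solutions from travelling waves is unnecessary: the convergence to the stable state behind the front is already contained in the Hamilton--Jacobi machinery itself, through \Cref{prop:zones}.
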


\subsection*{Spreading of compactly supported initial data}

Finally, we also deduce from the Hamilton-Jacobi framework a spreading result for initial conditions that are compactly supported. To this aim, let us first define the speed $w^*(e_0)$ associated to any direction $e_0 \in \mathbb{S}^{n-1}$ via the following Freidlin-G\"artner formula (see \cite{freidlin_propagation_1979} for its first derivation). 
\begin{equation*}
w^*(e_0) = \min_{\substack{e \in \mathbb{S}^{n-1}\\e_0 \cdot e > 0}}\left( \frac{c^*(e)}{e_0 \cdot e}\right).
\end{equation*}
We obtain the following result.

\begin{prop} \label{prop:spreadingbounded}
Let $f^0$ be a non-zero compactly supported initial data such that $0\leq f_0 (x,v)\leq M(v)$ for all $(x,v) \in \R^n \times V$. Let $f$ be the solution of the Cauchy 
problem \eqref{eq:main} associated to this initial data. Then for any $e_0 \in \mathbb{S}^{n-1}$ and all $x\in \R^n$, we have
\begin{equation}\label{eq:frontcompact}
\lim_{t \to \infty} f(t,x + c t e_0,v) = 0, \qquad \text{ if } c > w^*(e_0),
\end{equation}
pointwise
and
\begin{equation}\label{eq:backcompact}
\lim_{t \to \infty} f(t,c t e_0,v)  = M(v), \qquad \text{ if } 0 \leq c < w^*(e_0),
\end{equation}
for all $v \in V$.
\end{prop}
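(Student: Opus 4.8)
The plan is to deduce \Cref{prop:spreadingbounded} from the planar spreading result \Cref{prop:spreadingplanar} (itself the Hamilton--Jacobi--based input) by a comparison argument, the bridge being the Freidlin--G\"artner geometry encoded in $w^*(e_0)=\min_{e_0\cdot e>0} c^*(e)/(e_0\cdot e)$. The elementary starting point is that, $f_0$ being compactly supported, for every direction $e\in\mathbb{S}^{n-1}$ there is $R_e>0$ with
\[
0\le f_0(x,v)\le M(v)\,\mathbf{1}_{\{|x\cdot e|\le R_e\}},
\]
so that by the comparison principle for \eqref{eq:main} the solution $f$ is dominated by the solution issued from this planar-like (slab) datum in \emph{any} direction $e$. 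Everything then reduces to converting the planar speed $c^*(e)$ in direction $e$ into the speed $w^*(e_0)$ of the compact front along $e_0$.

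For the upper bound \eqref{eq:frontcompact}, fix $c>w^*(e_0)$ and pick $e^\sharp$ attaining the minimum (the minimum is attained in the open half-sphere $\{e_0\cdot e>0\}$, the ratio blowing up as $e\to e_0^\perp$), so that $c\,(e_0\cdot e^\sharp)>c^*(e^\sharp)$. Comparing $f$ from above with the solution started from $M(v)\mathbf{1}_{\{|x\cdot e^\sharp|\le R_{e^\sharp}\}}$ and invoking the front part \eqref{eq:frontplanar} of \Cref{prop:spreadingplanar} in direction $e^\sharp$, we obtain $\rho(t,x)\to 0$ uniformly on $\{x\cdot e^\sharp>c't\}$ for any $c'\in(c^*(e^\sharp),c\,(e_0\cdot e^\sharp))$. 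Since $(x+cte_0)\cdot e^\sharp\approx c\,(e_0\cdot e^\sharp)\,t>c't$, the moving point lies in this region for $t$ large. To upgrade $\rho\to 0$ into $f\to 0$ I would integrate \eqref{eq:main} along the characteristic $s\mapsto x+cte_0-v(t-s)$: writing $g(s)=f(s,\cdot,v)$ along it gives $g'+(1+r\rho)g=(1+r)M(v)\rho$, hence by Duhamel $f(t,x+cte_0,v)\le g(0)e^{-t}+(1+r)M(v)\int_0^t e^{-(t-s)}\rho(s)\,ds$. The backward characteristic lies in $\{x\cdot e^\sharp>c's\}$ for all $s$ outside an initial interval $[0,s_*]$ with $t-s_*$ of order $t$ (this uses $v_{max}<\infty$); on $[0,s_*]$ the Duhamel weight is $O(e^{-\kappa t})$ while on the complement $\rho$ is small, which yields \eqref{eq:frontcompact}.

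For the lower bound \eqref{eq:backcompact}, fix $0\le c<w^*(e_0)$; then $cte_0$ lies, with a margin, inside the set $\{x:x\cdot e\le c^*(e)\,t,\ \forall e\in\mathbb{S}^{n-1}\}$, since $c\,(e_0\cdot e)<c^*(e)$ for every admissible $e$. I would proceed in two steps. First, a hair-trigger step: using the monostable structure together with the redistribution of mass over velocities through the operator $M\rho-f$, one shows that $f(t,\cdot,v)\to M(v)$ locally uniformly, so that after some time $T_0$ the solution dominates $\gamma' M(v)\mathbf{1}_{B_{R_0}}$ with $R_0$ as large as desired. Second, taking this large-ball datum as a new initial condition and comparing from below with a planar sub-solution in the optimal direction $e^\sharp$, the back part \eqref{eq:backplanar} of \Cref{prop:spreadingplanar} delivers $f\to M$ behind the corresponding front; as \eqref{eq:backplanar} already concludes for $f$ itself (all $v$), no further upgrade is needed, and the geometric inequality $c<w^*(e_0)$ is exactly what ensures $cte_0$ is overtaken.

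The main obstacle is this last step. The planar statement \eqref{eq:backplanar} only fills points $y+c't\,e^\sharp$ with $y\in(e^\sharp)^\perp$ \emph{bounded}, whereas the target $cte_0$ drifts perpendicularly to $e^\sharp$ at the linear rate $c\,t\,|e_0^\perp|$; since a ball does not sit below a half-space, a naive one-dimensional comparison fails and one must control the lateral erosion of the invaded region, i.e.\ genuinely derive ``spreading from a ball in every direction at speeds $c^*(e)-\delta$'' out of the planar result. The finiteness of $v_{max}$ keeps this erosion at finite speed, and $c<w^*(e_0)=\min_e c^*(e)/(e_0\cdot e)$ is precisely the condition under which the growing invaded ball overtakes the drifting point $cte_0$. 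Making this quantitative, by iterating the comparison over a covering by planar sub-solutions in finitely many directions, is the technical heart of the Freidlin--G\"artner lower bound and the step I expect to require the most care.
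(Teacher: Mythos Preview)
Your approach differs substantially from the paper's on both halves, and for the lower bound this difference matters.

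For the upper bound, the paper does not go through \Cref{prop:spreadingplanar} at all: it uses directly the exponential super-solution
\[
\overline f(t,x,v)=\inf_{e\in\mathbb{S}^{n-1}} e^{-\lambda^*(e)(x\cdot e-c^*(e)t)}\,Q_{\lambda^*(e)e}(v),
\]
which dominates $f$ by the comparison principle (after multiplying by a constant, using that $f_0$ is compactly supported). Evaluating at $x+cte_0$ and choosing $e$ such that $c\,(e_0\cdot e)>c^*(e)$ gives decay of $f$ pointwise in $v$, with no Duhamel upgrade needed. Your route via the planar $\rho$-statement followed by integration along characteristics is workable but more laborious.

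For the lower bound, the paper bypasses entirely the obstacle you correctly identify. It does \emph{not} attempt to deduce the compactly supported case from the planar one; instead it applies the Hamilton--Jacobi machinery directly to the compactly supported datum. The key input is \Cref{prop:nullsetfreidlin_functional_1985}, which computes the nullset of the limiting $\varphi^0$ for a Dirac-type initial condition and produces the Freidlin--G\"artner speed $w^*(e_0)$ in one stroke via the Lagrangian. Combined with \Cref{prop:zones} (convergence $f^\eps\to M$ on $\mathrm{Int}\{\varphi^0=0\}$), one obtains
\[
\lim_{t\to\infty} f(t,cte_0,v)=\lim_{\eps\to0} f^\eps(1,ce_0,v)=M(v)
\]
immediately, since $c<w^*(e_0)$ places $(1,ce_0)$ in the interior of $\{\varphi^0=0\}$.

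Your proposed route --- hair-trigger to a large ball, then iterated planar comparisons in finitely many directions to control lateral erosion --- is in principle a known strategy for Freidlin--G\"artner lower bounds, but it is a substantial project in itself, and the hair-trigger step for the kinetic equation is not available off-the-shelf here. The point is that the paper's HJ limit already encodes the full multidimensional geometry: once \Cref{thm:HJlimit} is proved, the Freidlin--G\"artner formula drops out of the Hopf--Lax computation in \Cref{prop:nullsetfreidlin_functional_1985}, and no reduction to the planar case is needed. You are effectively proposing to re-derive the content of \Cref{prop:nullsetfreidlin_functional_1985} by PDE comparison arguments, which is the hard way around.
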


This result is interesting since contrary to the case of the usual Fisher-KPP equation in heterogeneous domains, where the Freidlin-G\"artner formula holds, see \cite{rossi_freidlin-gartner_2015}, here there is no heterogeneity in space. The heterogeneity coming potentially from the velocity set, this would not be present in the macroscopic limit (the Fisher-KPP equation, see above). Of course, if $V$ is rotationally symmetric, the speed $w^*$ is independent of the direction, and the propagation is radial.

One could wonder how the shape of $V$ (\textit{e.g.} its topological properties or its convexity) influences the shape of the front and the speed of propagation. We will investigate this question in a future work.


The rest of this paper is organized as follows. In \Cref{sec:HJ}, we prove the Hamilton-Jacobi limit. We discuss the construction of travelling waves and the spreading results in \Cref{sec:TW}.

\subsection*{Acknowledgements}

This work has started after a discussion with Olivier Benichou and Raphaël Voituriez. The authors thank warmly Vincent Calvez for early discussions about this problem, and for a careful reading of the manuscript. This work has benefited of an invaluable insight from Guy Barles, which, while originally meant for another manuscript, found a direct application in the present one. The authors are more than deeply grateful to Guy Barles for this. EB acknowledges the green hospitality of the University of Cambridge during the second semester of the academic year 2015-2016. EB and NC acknowledge the support of the ERC Grant MATKIT
(ERC-2011-StG). NC thanks the University of Cambridge for its hospitality during his three week stay over Spring 2016.  In addition, NC has received funding from the European Research Council (ERC) under the European Union's Horizon 2020 research and 
innovation program (grant agreement No 639638). EB dedicates this work to Paul-Antoine Girard, as an encouragement to never give up and always slide towards his highest ambitions.

\section{The Hamilton-Jacobi limit}\label{sec:HJ}

In this Section, we present the proof of the convergence result \Cref{thm:HJlimit}. We then prove a convergence result for $\rho^\eps$ in the region $\lbrace \varphi^0 = 0 \rbrace$. This result will help us to show that the speed of propagation is still the minimal speed of existence of travelling waves, despite the singularity of $\mathcal{H}$.

\subsection{The spectral problem}\label{thespectralproblem}

In this Section, we discuss the resolution of the spectral problem given by \eqref{eq:spectralproblem}. We also provide examples for which the singular set of $M$ is not empty.

\subsubsection{The resolution}

For any $p'>0$, we look for an eigenvalue $\mathcal{H}(p')$ associated to a positive eigenvector $\widetilde{Q}_{p}$ such that 
\begin{equation*}
\left( 1 + \mathcal{H}(p') - v \cdot p' \right) \widetilde{Q}_{p'}(v) = M(v)\int_V \widetilde{Q}_{p'}(v') \, dv', \qquad v \in V.
\end{equation*}
Note that it may happen that $\widetilde{Q}_{p'}$ has a singular part. Since the problem is linear, one can always assume that $\widetilde{Q}_{p'}$ is a probability measure. We are thus led to find an eigenvalue $\mathcal{H}(p')$ such that there exists a probability measure $\widetilde{Q}_{p'}$\footnote{To avoid too many notation, we identify $\widetilde{Q}_{p'}$ to its density when relevant.} such that 
\begin{equation*}
\left( 1 + \mathcal{H}(p') - v \cdot p' \right) \widetilde{Q}_{p'}(v) = M(v), \qquad v \in V.
\end{equation*}

To make the singular set $\mathrm{Sing}\left(M\right)$ appear naturally, let us first  investigate the case when $\widetilde{Q}_{p'} \in L^1(V)$. If a solution exists, then the profile $\widetilde{Q}_{p'}$ necessarily satisfies the following equation:
\begin{equation}\label{eq:profile}
\widetilde{Q}_{p'}(v) = \frac{M(v)}{ 1 + \mathcal{H}(p') - v \cdot p'}, \qquad v \in V.
\end{equation}
This is only possible if such an expression defines a probability measure. As a consequence, one shall look for conditions under which there exists $\mathcal{H}(p')$ such that 
\begin{equation*}
I \left(\mathcal{H}(p'),p' \right) := \int_V \frac{M(v')}{1 + \mathcal{H}(p') - v' \cdot p'} dv' = 1,
\end{equation*}
with $1 + \mathcal{H}(p') - v' \cdot p > 0$ for all $v' \in V$, that is $\mathcal{H}(p') > \mu(p') - 1$.

For any $p' \notin \text{Sing}(M)$, since the function $\xi \mapsto I(\xi,p')$ is decreasing over $(\mu(p') - 1,+\infty)$, $\mathcal{H}(p')$ exists and is unique in this interval since $I(\mu(p') - 1,p') > 1$, by the definition of $p'$ not being in the singular set. 

However, for any $p' \in \text{Sing}(M)$, it is not possible to solve $I \left(\mathcal{H}(p'),p' \right) = 1$ since $I(\mu(p') - 1,p') \leq 1$. After Theorem 1.2 in \cite{coville_singular_2013-1}, there exists a solution to \eqref{eq:spectralproblem}, given by the couple $(\mathcal{H}(p'),\widetilde{Q}_{p'})$ where $\mathcal{H}(p')=\mu(p')-1$ and $\widetilde{Q}_{p'}$ is a positive measure given by:
\begin{equation*}
\widetilde{Q}_{p'} :=\frac{M(v)}{\mu(p')-v\cdot p'} dv + \left( 1 - \int_V \frac{M(v)}{\mu(p')-v\cdot p'} dv \right) \delta_w,
\end{equation*}
 where $\delta_w$ is the dirac mass located at $w\in \mathrm{Arg}\,\mu(p')$.
 
From \cite{caillerie_large_2017}, we know that the set $\mathrm{Sing}(M)^{c}$ is convex and contains $0$. To identify the different cases where such a singularity set may occur, we detail three examples hereafter.
 

\subsubsection{Examples}

\begin{example}\label{ex:nosing}
In the one-dimensional case ($n=1$), we have $\mathrm{Sing}(M)=\emptyset$ when $\underset{v\in V}{\mathrm{inf}}\,M(v)>0$ since
\begin{equation}\label{eq:ex}
\int_{-\overline{v}}^{\overline{v}}\frac{M(v')}{\mu(p')-vp'}dv= \int_{-\overline{v}}^{\overline{v}}\frac{M(v)}{|p'|v_{max}-vp'}dv\geq \frac{\mathrm{inf}\,M(v)}{|p'|\overline{v}}\cdot \int_{-1}^{1}\frac{dv}{1-v}=+\infty.
\end{equation}
By monotone convergence we have
\begin{equation*}
\underset{H\to \mu(p')-1}{\mathrm{lim}}\,\int_{-\overline{v}}^{\overline{v}}\frac{M(v)}{1+H -vp'}dv = +\infty,
\end{equation*}
hence, for all $p'\in\mathbb{R}$, there exists a unique $\mathcal{H}(p')$ that solves the spectral problem in $L^1(V)$.
\end{example}

This latter framework is the one used in \cite{bouin_propagation_2015}. In fact, we can only require that $M$ does not cancel in a neighborhood of $v=\overline{v}$ in order to get $\mathrm{Sing}(M)=\emptyset$. Indeed, the integral in \eqref{eq:ex} will also diverge in that scenario. If $M(\overline{v})=0$, this argument may not work out. Consider for example:
\begin{example}\label{ex:annulebord}
Let $n=1$, $V=[-1,1]$ and $M(v)=\frac{3}{2}(1-|v|)^2$. Then,
\begin{align*}
l(1)=\int_{-1}^{1}\frac{M(v)}{1-v} \,dv &=\frac{3}{2}\int_{-1}^{1}\frac{(1-|v|)^2}{1-v} \, dv = 3 \int_{0}^{1}\frac{(1-v)^2}{(1-v)(1+v)} \, dv \\
&= 3 \int_{0}^{1}\frac{1-v}{1+v} \, dv = 3 \int_{0}^{1}\frac{2-(1+v)}{1+v} \, dv\\ 
&= 3 \left( \int_{0}^{1}\frac{2}{1+v} \, dv - 1 \right) = 3(2\ln(2)-1).
\end{align*}
Hence, $|p'| \geq 3(2\ln(2)-1)$ if and only if
\begin{equation*}
\int_{-1}^{1}\frac{M(v)}{\mu(p')-vp'}dv=\frac{3}{2|p'|}\int_{-1}^{1}\frac{(1-|v|)^2}{1-v}dv\leq 1,
\end{equation*}
therefore, $\mathrm{Sing}(M)=\left(-3(2\ln(2)-1),3(2\ln(2)-1)\right)^c$. Let us also notice that
\begin{align*}
\int_V \frac{M(v)}{(\overline{v}(1)-v)^2}dv &=\frac{3}{2}\int_{-1}^{1}\frac{(1-|v|)^2}{(1-v)^2}dv=3\int_{0}^{1}\frac{1+v^2}{(1+v)^2}dv\\
&=3\int_{0}^{1}\frac{(1+v)^2 - 2(1+v)+2}{(1+v)^2}dv\\
&=3\int_{0}^{1} \left( 1 - \frac{2}{1+v}+ \frac{2}{(1+v)^2} \right)dv\\
&=3(1 -2 \ln(2) + 1) = 6(1 - \ln(2)) <+\infty.
%
\end{align*}
We will make a use of this result later.
\end{example}

In the multi-dimensional case, we may encounter a singular set, even when $\underset{v\in V}{\mathrm{inf}}\,M(v)>0$. These singularities can occur in the simplest cases.

\begin{example}\label{ex:counterexample}
Let $n\geq1$, let $V=B(0,1)$ be the $n$-dimensional unit ball. Let $e=e_1$ and $M=\omega_{n}^{-1}.\mathds{1}_{\overline{B\left(0,1\right)}}$,
where $\omega_{n}$ is the Lebesgue measure of $V$. For $n=1$, since $M>0$ we have $\mathrm{Sing}(M)=\emptyset$ (recall \cref{ex:nosing}). Suppose now that $n>1$. Then, 
\begin{eqnarray*}
 l(e_1) &=&\int_{B\left(0,1\right)}\frac{M(v)}{\overline{v}(e_1)-v\cdot e_1}dv \\
 & = & \frac{1}{\omega_{n}}\int_{B\left(0,1\right)}\frac{1}{1-v_1}dv\\
&=&\frac{1}{\omega_{n}}\int_{-1}^{1}\frac{1}{1-v_{1}}\left(\int \mathds{1}_{\left\{ v_1^2+v_2^2+\ldots+v_n^2\leq1\right\}}(v_2,\ldots,v_n)dv_2\ldots dv_n\right)dv_1.\\
\end{eqnarray*}
Now, for fixed $v_1$, the quantity $\int \mathds{1}_{\left\{ v_1^2+v_2^2+\ldots+v_n^2\leq1\right\}}(v_2,\ldots,v_n)dv_2\ldots dv_n$ is the Lebesgue measure of the $(n-1)$-dimensional ball of radius $\sqrt{1-v_1^2}$, hence
\begin{equation*}
\int \mathds{1}_{\left\{ v_1^2+v_2^2+\ldots+v_n^2\leq1\right\}}(v_2,\ldots,v_n)dv_2\ldots dv_n=\omega_{n-1}\times \left(\sqrt{1-v_1^2}\right)^{n-1}.
\end{equation*}
Finally,
\begin{align*}
l(e_1)& = \frac{\omega_{n-1}}{\omega_{n}}\int_{-1}^{1} \frac{\left(1-v_{1}^{2}\right)^{\frac{n-1}{2}}}{1-v_1} dv_{1} \\
&=\frac{2\omega_{n-1}}{\omega_{n}}\int_{0}^{1} \left(1-v_{1}^{2}\right)^{\frac{n-3}{2}} dv_{1}\\
&=\frac{2\omega_{n-1}}{\omega_{n}}\int_{0}^{\frac{\pi}{2}} \left(\cos(\theta)\right)^{n-2} d\theta,\\
&=  \frac{n}{n-1},
\end{align*}
where we have used, for example, the relationship between the volume of the unit ball and the Wallis integrals. By rotational invariance, $\mathrm{Sing}(M)=B\left(0,\frac{n-1}{n}\right)^{c}$.
\end{example}

\subsection{Proof of \Cref{thm:HJlimit}}

In this Section, we now prove \Cref{thm:HJlimit}. We will use the half-relaxed limits method of Barles and Perthame \cite{barles_exit_1988}. In addition to that, and similarly to the papers \cite{bouin_kinetic_2012,caillerie_large_2017}, we need to use the perturbed test-function method. We emphasize that the corrected test function is defined thanks to the spectral problem \eqref{eq:spectralproblem}, keeping only the regular part of the eigenfunction (recall that it may have singularities).

Since the sequence $\varphi^\eps$ is uniformly bounded by the maximum principle (check Proposition 5 in \cite{bouin_hamilton-jacobi_2015}), we can define its upper- and lower- semi continuous envelopes by the following formulas
\begin{equation*}
\varphi^*(t,x,v) = \limsup_{\substack{\eps \to 0\\(s,y,w) \to (t,x,v)}} \varphi^\eps(s,y,w), \qquad \varphi_*(t,x,v) = \liminf_{\substack{\eps \to 0\\(s,y,w) \to (t,x,v)}} \varphi^\eps(s,y,w)
\end{equation*}
Recall that $\varphi^*$ is upper semi-continuous, $\varphi_*$ is lower semi-continuous and that from their definition, one has $\varphi_* \leq \varphi^*$. We have the following: 
\begin{prop}
Let $\varphi^\eps$ be a solution to \eqref{eq:mainHJeps}. 
\begin{enumerate}\label{prop:semilimits}
\item[(i)] The upper semi-limit $\varphi^*$ is constant with respect to the velocity variable on $\R_+^* \times \R^n$.
\item[(ii)] The function $(t,x) \mapsto \varphi^*(t,x)$ is a viscosity sub-solution to \eqref{eq:varHJ} on $\R_+^* \times \R^n$.
\item[(iii)] The function $(t,x) \mapsto \min_{w \in V} \varphi_*(t,x,w)$ is a viscosity super-solution to \eqref{eq:varHJ} on $\R_+^* \times \R^n$.
\end{enumerate}
\end{prop}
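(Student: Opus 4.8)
The plan is to run the half-relaxed limits method of Barles--Perthame, combined with a perturbed test-function built from the spectral corrector of \eqref{eq:spectralproblem}. I work with the form of \eqref{eq:mainHJeps} obtained using $\int_V M=1$,
\begin{equation*}
\partial_t\varphi^\eps+v\cdot\nabla_x\varphi^\eps+r=(1+r)-(1+r)\int_V M(v')e^{(\varphi^\eps(v)-\varphi^\eps(v'))/\eps}\,dv'+r\rho^\eps,
\end{equation*}
together with $0\le\rho^\eps\le1$ and the uniform Lipschitz bounds in $(t,x)$, and I abbreviate $\mathcal{G}(p):=(1+r)\mathcal{H}(p/(1+r))+r$, so the PDE part of \eqref{eq:varHJ} is $\partial_t\varphi^0+\mathcal{G}(\nabla_x\varphi^0)=0$. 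For (i), solving the displayed identity for the collision gain and using the bounds gives $\int_V M(v')e^{(\varphi^\eps(v)-\varphi^\eps(v'))/\eps}dv'\le C$ uniformly, hence the one-sided estimate $\varphi^\eps(t,x,v)\le\psi^\eps(t,x)+\eps\ln C$ with $\psi^\eps:=-\eps\ln\int_V Me^{-\varphi^\eps/\eps}$ independent of $v$; taking the upper semilimit yields $\varphi^*\le\psi^*$ for every $v$. The matching lower bound comes from $\int_V Me^{-(\varphi^\eps-\psi^\eps)/\eps}=1$, which forces the set where $\varphi^\eps<\psi^\eps-o(1)$ to carry vanishing $M$-mass, so that $\varphi^*\equiv\psi^*$ is $v$-independent.

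Next I build the corrector from the spectral resolution of \eqref{eq:spectralproblem}. At a contact point $(t_0,x_0)$, set $p_0=\nabla_x\phi(t_0,x_0)$ and $p'=p_0/(1+r)$; when $p'\notin\mathrm{Sing}(M)$ I take $\eta(v)=-\ln(\widetilde Q_{p'}(v)/M(v))$, where $\widetilde Q_{p'}$ is the eigenfunction of \eqref{eq:spectralproblem} normalised as a probability measure, so that $e^{\eta(v)}=1+\mathcal{H}(p')-v\cdot p'$ and $\int_V Me^{-\eta}=\int_V\widetilde Q_{p'}=1$, and I set $\phi^\eps(t,x,v)=\phi(t,x)+\eps\eta(v)$. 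For the supersolution (iii), a minimum of $\min_w\varphi_*-\phi$ produces minima $(t_\eps,x_\eps,v_\eps)\to(t_0,x_0,\bar v)$ of $\varphi^\eps-\phi^\eps$; minimality in $v$ gives $\int_V Me^{(\varphi^\eps(v_\eps)-\varphi^\eps(v'))/\eps}dv'\le e^{\eta(v_\eps)}$, and inserting this with the matched $(t,x)$-derivatives into the equation bounds the gain term $(1+r)\int_V Me^{(\varphi^\eps(v_\eps)-\varphi^\eps(v'))/\eps}dv'$ above by $1+\mathcal{G}(\nabla_x\phi)-v_\eps\cdot\nabla_x\phi$. The velocity terms cancel against the transport term, leaving in the limit $\partial_t\phi(t_0,x_0)+\mathcal{G}(p_0)\ge r\rho^0\ge0$; together with $\min_w\varphi_*\ge0$ (from $\varphi^\eps\ge0$) this is the supersolution inequality. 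The nonnegativity of the source $r\rho^\eps$ is what makes this side close for free.

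For the subsolution (ii), a maximum of $\varphi^*-\phi$ with $\varphi^*(t_0,x_0)=0$ is settled by the obstacle, $\min\{\cdots,\varphi^*\}\le0$. When $\varphi^*(t_0,x_0)>0$, the same computation with reversed inequalities gives $\partial_t\phi(t_0,x_0)+\mathcal{G}(p_0)\le r\rho^0$, so I must prove $\rho^0=0$. This is the first real obstacle: the source now has the wrong sign, and one needs the KPP leading-edge fact that $\rho^\eps\to0$ wherever $\varphi^0>0$. I would derive it from a uniform-in-$v$ lower bound $\varphi^\eps\ge\delta>0$ near $(t_0,x_0)$, giving $\rho^\eps\le e^{-\delta/\eps}$, which follows from the finite speed of propagation of \eqref{eq:main} (comparison with an explicit supersolution of the $f^\eps$-equation) keeping $\varphi^\eps$ bounded below ahead of the front.

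The second and genuinely new obstacle is the singular regime $p'\in\mathrm{Sing}(M)$, where $\mathcal{H}(p')=\mu(p')-1$ and $\widetilde Q_{p'}$ carries a Dirac mass at $\mathrm{Arg}\,\mu(p')$, so the corrector $\eta$ blows up there. Keeping only the regular part $M(v)/(\mu(p')-v\cdot p')$, whose integral is $\le1$ exactly by \eqref{eq:Sing}, the supersolution computation still closes because its minimising velocity avoids $\mathrm{Arg}\,\mu(p')$. For the subsolution I would instead localise the maximising velocity onto $\mathrm{Arg}\,\mu(p')$ through a truncated or penalised corrector, so that $v_\eps\cdot p'\to\mu(p')$, and combine this with the elementary bound $\partial_t\varphi^\eps+v\cdot\nabla_x\varphi^\eps\le1+r\rho^\eps$ (from $1-e^{\,\cdot\,}\le1$) to recover $\partial_t\phi+\mathcal{G}(p_0)\le0$ once $\rho^0=0$. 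Approximating singular $p'$ by non-singular momenta and invoking continuity of $\mathcal{H}$ together with the stability of viscosity sub/supersolutions gives a clean way to treat the whole singular set; handling this singular corrector is where I expect the main work to lie.
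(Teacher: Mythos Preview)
Your overall strategy (half-relaxed limits with a perturbed test function built from the spectral corrector) is correct and matches the paper, but several steps are either unjustified or substantially more complicated than necessary.

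\textbf{Part (i).} You assume ``uniform Lipschitz bounds in $(t,x)$'' to get $\int_V M(v')e^{(\varphi^\eps(v)-\varphi^\eps(v'))/\eps}dv'\le C$ pointwise. No such bounds are established; only a uniform $L^\infty$ bound is available, and the whole point of the half-relaxed limits is to avoid a priori regularity. The paper instead takes a test function $\psi$ with $\varphi^*-\psi$ having a strict local maximum at $(t^0,x^0,v^0)$, finds approximate maxima $(t^\eps,x^\eps,v^\eps)$, and there the derivatives match those of $\psi$, so the integral is bounded \emph{at those points only}. Then Jensen on any measurable $V'\subset V$ plus Fatou gives $\varphi^*(t^0,x^0,v^0)\le\inf_V\varphi^*(t^0,x^0,\cdot)$, hence $v$-independence. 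Your direct $\psi^\eps$-argument does not go through without the Lipschitz bound.

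\textbf{Part (ii), the vanishing of $\rho^\eps$.} Your route via ``finite speed of propagation / explicit supersolution giving $\varphi^\eps\ge\delta$ uniformly in $v$'' is indirect and you do not actually carry it out. The paper does this in one line at the maximum point: since $\varphi^\eps(t^\eps,x^\eps,v^\eps)\to\varphi^*(t^0,x^0)>0$, rewrite \eqref{eq:mainHJeps} as
\[
r\rho^\eps\bigl(e^{\varphi^\eps/\eps}-1\bigr)=\Bigl(1-\int_V M(v')e^{(\varphi^\eps-\varphi^{\eps\prime})/\eps}dv'\Bigr)-\bigl(\partial_t\psi^\eps+v^\eps\cdot\nabla_x\psi^\eps\bigr),
\]
whose right-hand side is bounded above; since $e^{\varphi^\eps/\eps}\to+\infty$, $\rho^\eps(t^\eps,x^\eps)\to0$.

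\textbf{Part (ii), the singular case $p'\in\mathrm{Sing}(M)$.} Your plan (truncated/penalised corrector forcing $v_\eps\cdot p'\to\mu(p')$, or approximation by non-singular momenta plus stability) is workable but heavy. The paper's argument is far simpler and uses no corrector at all: pick any $v^*\in\mathrm{Arg}\,\mu(p^0)$, use part (i) to know that $\varphi^*(\cdot,\cdot,v^*)=\varphi^*$, so $(t,x)\mapsto\varphi^\eps(t,x,v^*)-\psi(t,x)$ has maxima $(t^\eps,x^\eps)\to(t^0,x^0)$, and the trivial bound $\int_V M(v')(1-e^{\cdots})dv'\le1$ gives directly $\partial_t\psi+v^*\cdot\nabla_x\psi+r\le(1+r)+r\rho^\eps$, i.e.\ $\partial_t\psi+\mu(\nabla_x\psi)\le1$ after $\rho^\eps\to0$.

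\textbf{Part (iii), the singular case.} Your claim that ``the minimising velocity avoids $\mathrm{Arg}\,\mu(p')$'' is not enough: with the unbounded corrector $\eta(v)=\ln(\mu(p')-v\cdot p')$, the perturbed test function $\psi+\eps\eta$ does not converge uniformly to $\psi$, so you cannot conclude that the approximate minima in $(t,x)$ converge to $(t^0,x^0)$. The paper fixes this by truncating, $\eta_\delta=\max(\eta,-1/\delta)$, so that $\psi^\eps=\psi+\eps\eta_\delta$ converges uniformly; the key inequality $\int_V Me^{-\eta_\delta}\le\int_V Me^{-\eta}\le1$ still holds because $\eta_\delta\ge\eta$. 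One first sends $\eps\to0$ (extracting a limit $v_\delta^0$), then $\delta\to0$ (extracting $v^*$), and distinguishes whether $v^*\in\mathrm{Arg}\,\mu(p^0)$ or not; in the former case $\eta_\delta(v_\delta^0)\to-\infty$ gives the desired inequality. This double limit with truncation is the genuine content of the singular supersolution argument, and it is missing from your sketch.
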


We recall that for all $(t,x)$, the minimum $\min_{w \in V} \varphi_*(t,x,w)$ is attained since $V$ is bounded and $\varphi_*$ is lower semi-continuous. 
We point out here that if $r=0$, that is, the case of \cite{bouin_kinetic_2012,caillerie_large_2017}, it is not necessary to prove that $\varphi^*$ is constant in the velocity variable. One can replace this by proving that $\max_{w \in V} \varphi^*$ is a sub-solution to \eqref{eq:varHJ}. The fact that $\varphi^*$ is constant in the velocity variable is needed to control the limit of $\rho^\eps$.

\begin{proof}[{\bf Proof of \Cref{prop:semilimits}}]

We start with the proof of (i). Take $(t^0,x^0,v^0) \in \R^{*}_+ \times \R^n \times V$. Let $\psi$ be a test function such that $\varphi^* - \psi$ has a strict local maximum at $(t^0,x^0,v^0)$. Then there exists a sequence $\left( t^{\eps},x^{\eps},v^{\eps}\right)$ of maximum points of $\varphi^{\eps}-\psi$ satisfying $(t^{\eps},x^{\eps},v^{\eps})\to(t^0,x^0,v^0)$. From this we deduce that $\lim_{\eps \to 0} \varphi^{\eps}(t^{\eps},x^{\eps},v^{\eps}) = \varphi^*(t,x,v)$. Recalling \eqref{eq:mainHJeps}, we have at $\left( t^{\eps},x^{\eps},v^{\eps}\right)$:
\begin{equation*}
\partial_t \psi + v^{\eps} \cdot \nabla_x \psi +r = (1+r)\left(1-\int_V M(v') e^{\frac{\varphi^{\eps} -\varphi^{\eps'}}{\eps}} dv'\right) +r\rho^{\eps}.
\end{equation*}
From this, we deduce that 
\begin{equation*}
\int_{V'} M(v') e^{\frac{\varphi^{\eps}(t^{\eps},x^{\eps},v^{\eps}) -\varphi^{\eps}(t^{\eps},x^{\eps},v')}{\eps}} dv'
\end{equation*}
is uniformly bounded for any $V' \subset V$. By the Jensen inequality, 
\begin{equation*}
\exp\left( \frac{1}{\eps\left\vert V'\right\vert_M} \int_{V'} \left(\varphi^{\eps}(t^{\eps},x^{\eps},v^{\eps}) -\varphi^{\eps}(t^{\eps},x^{\eps},v') \right) M(v') dv'  \right) \leq \frac{1}{\left\vert V' \right\vert_M}\int_{V'} M(v') e^{\frac{\varphi^{\eps}(t^{\eps},x^{\eps},v^{\eps}) -\varphi^{\eps}(t^{\eps},x^{\eps},v')}{\eps}} dv',
\end{equation*}
where $\left\vert V' \right\vert_M:=\int_{V'}M(v)dv$. Thus, 
\begin{equation*}
\limsup_{\eps \to 0} \left( \int_{V'} \left(\varphi^{\eps}(t^{\eps},x^{\eps},v^{\eps}) -\varphi^{\eps}(t^{\eps},x^{\eps},v') \right) M(v') dv' \right) \leq 0.
\end{equation*}
We write
\begin{align*}
\int_{V'} \left(\varphi^{\eps}(v^{\eps}) -\varphi^{\eps}(v') \right) M(v') dv' &= \int_{V'} \left[\left( \varphi^{\eps}(v^{\eps}) - \psi(v^{\eps})\right) - \left( \varphi^{\eps}(v') - \psi(v') \right) + \left( \psi(v^{\eps}) - \psi(v')\right)\right] M(v') dv'\\
&= \int_{V'} \left[\left( \varphi^{\eps}(v^{\eps}) - \psi(v^{\eps})\right) - \left( \varphi^{\eps}(v') - \psi(v') \right) \right] M(v') dv'\\ &\hfill + \int_{V'} \left( \psi(v^{\eps}) - \psi(v')\right) M(v') dv'
\end{align*}
We can thus use the Fatou Lemma, together with $- \limsup_{\eps \to 0} \varphi^{\eps}(t^{\eps},x^{\eps},v') \geq - \varphi^*(t^0,x^0,v')$ to get
\begin{align*}
\left(\int_{V'} M(v')dv'\right) \varphi^*(v^0) - \int_{V'}\varphi^*(v')M(v') dv' & = \int_{V'} \left(\varphi^*(v^0) -\varphi^*(v') \right) M(v') dv' \\
&\leq
\int_{V'} \liminf_{\eps \to 0}  \left(\varphi^{\eps}(v^{\eps}) -\varphi^{\eps}(v') \right) M(v') dv' \\
&\leq \liminf_{\eps \to 0} \left( \int_{V'} \left(\varphi^{\eps}(v^{\eps}) -\varphi^{\eps}(v') \right) M(v') dv' \right)\\ 
&\leq \limsup_{\eps \to 0} \left( \int_{V'} \left(\varphi^{\eps}(v^{\eps}) -\varphi^{\eps}(v') \right) M(v') dv' \right)\\
&\leq 0, 
\end{align*}
We shall deduce, since the latter is true for any $\vert V' \vert$ that
\begin{equation*}
\varphi^*(t^0,x^0,v^0) \leq \inf_V \varphi^*(t^0,x^0,\cdot)
\end{equation*}
and thus $\varphi^*$ is constant in velocity. 

We now continue with the proof of (ii). We have to prove that on $\lbrace\varphi^* >0\rbrace\cap(\R_+^* \times \R^n)$, the function $\varphi^*$ is a viscosity subsolution of \eqref{eq:varHJ}. To this aim, let $\psi \in C^2(\R_+^* \times \R^n)$ be a test function such that $\varphi^*-\psi$ has a local maximum in $(t^0,x^0)\in (\R_+^* \times \R^n) \cap \left\{\varphi^*>0\right\}$. We denote by $p^0(t^0,x^0) = \frac{\nabla_x \psi(t^0,x^0)}{1+r}$.

\medskip

{\bf \# First case : $p^0(t^0,x^0) \notin\mathrm{Sing}\,M$.} 

\medskip

\noindent We define a corrector $\eta$ according to the following formula: 
\begin{equation*}
\eta(v) = \ln \left( 1 + \mathcal{H}\left(p^0(t^0,x^0)\right) - v \cdot p^0(t^0,x^0) \right) 
\end{equation*}
Let us define the perturbed test function $\psi^{\eps}:=\psi +\eps\eta$. We recall the fact that in this case $\int_V M(v') \exp(-\eta(v')) dv' = 1$. The function $\psi^\eps$ converges uniformly to $\psi$ since $\eta$ is bounded on $V$. As a consequence, there exists a sequence $\left( t^{\eps},x^{\eps},v^{\eps}\right)$ of maximum points of $\varphi^{\eps}-\psi^{\eps}$ satisfying $(t^{\eps},x^{\eps})\to(t^0,x^0)$ and such that $\lim_{\eps \to 0} \varphi^{\eps}(t^\eps,x^\eps,v^\eps) = \varphi^*(t^0,x^0)$. Recalling \eqref{eq:mainHJeps}, we have at $\left( t^{\eps},x^{\eps},v^{\eps}\right)$:
\begin{equation*}
\partial_t \psi^{\eps} + v^{\eps} \cdot \nabla_x \psi^{\eps}+r = (1+r)\left(1-\int_V M(v') e^{\frac{\varphi^{\eps} -\varphi^{\eps'}}{\eps}} dv'\right) +r\rho^{\eps}.
\end{equation*}
Since $\left( t^{\eps},x^{\eps},v^{\eps}\right)$ is a maximum point, we may rearrange the r.h.s. of the latter so that the previous equation may be rewritten as follows
\begin{eqnarray*}
\partial_t \psi^{\eps} + v^{\eps} \cdot \nabla_x \psi^{\eps} +r & \leq & (1+r)\left(1-\int_V M(v') \exp \left(\eta(v^\eps)-\eta(v')\right) dv'\right)+r\rho^{\eps},\\
 & = & (1+r)\left(1- \left( \int_V M(v') \exp\left(-\eta(v')\right) dv'\right) \exp \left(\eta(v^\eps)\right) \right)+r\rho^{\eps},\\
 & = & (1+r)\left(1- \exp \left(\eta(v^\eps)\right) \right)+r\rho^{\eps},\\
& = &  - (1+r) \mathcal{H}\left(p^0(t^0,x^0)\right) + v^\eps \cdot \nabla_x \psi^0(t^0,x^0) + r\rho^\eps.
\end{eqnarray*}
Since $(t^0,x^0)\in\left\{ \varphi^*>0\right\}$ and $\lim_{\eps \to 0} \varphi^{\eps}(t^\eps,x^\eps,v^\eps) = \varphi^*(t^0,x^0)$, we have that, eventually, $\varphi^{\eps}(t^\eps,x^\eps,v^\eps) > \varphi^*(t^0,x^0)/2 >0$ for $\eps$ sufficiently small. Since 
\begin{equation*}
r\rho^\eps \left( e^\frac{\varphi^{\eps}}{\eps} - 1 \right) = \left(1-\int_V M(v') e^{\frac{\varphi^{\eps} -\varphi^{\eps'}}{\eps}} dv'\right)- \left( \partial_t \psi^{\eps} + v^{\eps} \cdot \nabla_x \psi^{\eps} \right),
\end{equation*}
and the latter r.h.s. is uniformly bounded from above in $\eps$, we deduce that $\lim_{\eps \to 0}\rho^\eps(t^\eps,x^\eps) = 0$. Taking the limit $\eps\to 0$, we get 
%
%
%
\begin{eqnarray*}
\partial_t \psi \left( t^{0},x^{0}\right) + (1+r)\mathcal{H}\left(\frac{\nabla_x \psi (t^0,x^0)}{1+r}\right)+r \leq 0.
\end{eqnarray*}

\medskip

{\bf \# Second case : $p^0(t^0,x^0) \in \mathrm{Sing}\,M$.} 

\medskip

\noindent Let $v^*\in \mathrm{Arg} \, \mu(p^0(t^0,x^0))$. The function $(t,x)\mapsto \phi^\eps (t,x,v^*)-\psi(t,x)$ has a local maximum at a point $(t^\eps,x^\eps)$ satisfying $(t^\eps,x^\eps)\to(t^0,x^0)$ as $\eps\to0$. We then have:
\begin{eqnarray*}
\partial_t \psi (t^\eps,x^\eps)+ v^* \cdot \nabla_x \psi(t^\eps,x^\eps) + r & = & \partial_t \phi^\eps (t^\eps,x^\eps,v^*)+v^* \cdot \nabla_x \phi^{\eps}(t^\eps,x^\eps,v^*) + r \\
 & = & (1+r)\int_{V} M(v') \left( 1-e^{\frac{\varphi^\eps(v^*) - \varphi^\eps(v')}{\eps}} \right) dv'  +  r \rho^{\eps} \\
 & \leq & (1+r) +  r \rho^{\eps}.
\end{eqnarray*}
Since $(t^0,x^0)\in\left\{ \varphi^*>0\right\}$, we have $\rho^{\eps}(t^{\eps},x^{\eps})\to 0$. As a consequence, taking the limit $\eps\to0$, we get 
\begin{equation*}
\partial_t \psi(t^0,x^0) +\mu(\nabla_x \psi(t^0,x^0))  \leq 1.
\end{equation*}

We finally turn to the proof of (iii). That is, the fact that on $\R_+^* \times \R^n$, the function $\min_{w \in V} \varphi_*(\cdot,w)$ is a viscosity supersolution of \eqref{eq:varHJ}. 

Let $\psi \in C^1(\R_+^* \times \R^n)$ be a test function such that $\min_{w \in V} \varphi_* - \psi$ has a local minimum in $(t^0,x^0)\in \R_+^*$. We denote by $p^0(t^0,x^0) = \frac{\nabla_x \psi(t^0,x^0)}{1+r}$. We define the truncated corrector $\eta_\delta$,
\begin{eqnarray*}
\eta(v) & = & \ln \left( 1 + \mathcal{H}\left(p^0(t,x)\right) - v \cdot p^0(t,x) \right),\\
\eta_\delta(v) & = & \max\left( \eta(v) , -1/\delta \right).
\end{eqnarray*}
%
%
%
Let us define the perturbed test function $\psi^{\eps}:=\psi +\eps\eta_\delta$. 
For any $\delta>0$, the function $\psi^\eps$ converges uniformly to $\psi$ as $\eps\to 0$ since $\eta_\delta$ is bounded on $V$. Since $\varphi_*(t^0,x^0,\cdot)$ attains its minimum at, say, $v^0$, we have, for all $v \in V$ and locally in the $(t,x)$ variables, 
\begin{equation*}
\varphi_*(t^0,x^0,v^0) - \psi(t^0,x^0) =\min_{w \in V} \varphi_*(t^0,x^0) - \psi(t^0,x^0) \leq \min_{w \in V} \varphi_*(t,x) - \psi(t,x) \leq \varphi_*(t,x,v) - \psi(t,x),
\end{equation*}
and thus $(t^0,x^0,v^0)$ is a local minimum of $\varphi_* - \psi$, strict in the $(t,x)$ variables. By the definition of the lower semi-limit, there exists a sequence $\left( t_\delta^{\eps},x_\delta^{\eps},v_\delta^{\eps}\right)$ of minimum points of $\varphi^{\eps}-\psi^{\eps}$ satisfying $(t_\delta^{\eps},x_\delta^{\eps})\to(t^0,x^0)$. 
We obtain, after \eqref{eq:mainHJeps}, at the point $\left( t_\delta^{\eps},x_\delta^{\eps},v_\delta^{\eps}\right)$ ,
\begin{eqnarray*}
\partial_t \psi^{\eps} + v_\delta^{\eps} \cdot \nabla_x \psi^{\eps} +r & \geq & (1+r)\left(1-\int_V M(v') \exp \left(\eta_\delta(v^\eps)-\eta_\delta(v')\right) dv'\right),\\
 & = & (1+r)\left(1- \left( \int_V M(v') \exp\left(-\eta_\delta(v')\right) dv'\right) \exp \left(\eta_\delta(v_\delta^\eps)\right) \right).
\end{eqnarray*}
Since the sequence $v_\delta^\eps$ lies in a compact set, taking the limit $\eps\to 0$ (up to extraction), we obtain $v_\delta^0$ such that
\begin{eqnarray*}
\partial_t \psi + v_\delta^{0} \cdot \nabla_x \psi +r & \geq & (1+r)\left(1- \left( \int_V M(v') e^{-\eta_\delta(v')} dv'\right) e^{\eta_\delta(v_\delta^0)} \right).
\end{eqnarray*}
By construction, $\eta_\delta \geq \eta$. As a consequence, $\int_V M(v') e^{-\eta_\delta(v')} dv' \leq \int_V M(v') e^{-\eta(v')} dv' \leq 1$. Thus,  
\begin{eqnarray*}
\partial_t \psi  + v_\delta^{0} \cdot \nabla_x \psi +r & \geq & (1+r)\left(1 - e^{\eta_\delta(v_\delta^0)} \right).
\end{eqnarray*}
We now pass to the limit $\delta \to 0$. By compactness of $V$, one can extract a converging subsequence from $(v_\delta^0)_\delta$, we denote by $v^*$ the limit.
%
%
%

\medskip

{\bf \# First case : $p^0(t^0,x^0) \notin\mathrm{Sing}\,M$.} 

\medskip

\noindent In this case, since $\eta$ is bounded, $\eta_\delta = \eta$ for $\delta$ sufficiently small. Thus, passing to the limit $\delta \to 0$, one gets 
\begin{eqnarray*}
\partial_t \psi + v^* \cdot \nabla_x \psi +r & \geq &(1+r)\left(1- \exp \left(\eta(v^*)\right) \right),\\
& = &  - (1+r) \mathcal{H}\left(p^0(t^0,x^0)\right) + v^* \cdot \nabla_x \psi (t^0,x^0),
\end{eqnarray*}
from which we deduce
\begin{eqnarray*}
\partial_t \psi \left( t^{0},x^{0}\right) + (1+r)\mathcal{H}\left(\frac{\nabla_x \psi (t^0,x^0)}{1+r}\right)+r \geq 0.
\end{eqnarray*}

\medskip

{\bf \# Second case : $p^0(t^0,x^0) \in \mathrm{Sing}\,M$.} 

\medskip

\noindent In this case, the corrector $\eta_\delta$ is
\begin{equation*}
\eta_\delta(v) = \max\left( \mathrm{ln}\left(\mu\left(\nabla_x \psi (t^0,x^0)\right)-v \cdot \nabla_x \psi (t^0,x^0)\right), -1/\delta \right).
\end{equation*}

If $v^* \notin \mathrm{Arg} \, \mu(p^0(t^0,x^0))$, since $\eta$ is bounded on all compacts of $V\setminus \mathrm{Arg} \, \mu(p^0(t^0,x^0))$, $\eta_\delta(v^0_{\delta})=\eta(v^0_{\delta})$ for $\delta$ sufficiently small and we recover the first case.

If $v^*\in \mathrm{Arg} \, \mu(p^0(t^0,x^0))$, then take $\delta' > 0$, one has when $\delta < \delta'$ is sufficiently small,
\begin{equation*}
- \frac{1}{\delta'} = \eta_{\delta'}(v_\delta^0) \geq \eta_{\delta}(v_\delta^0),
\end{equation*}
and thus $\lim_{\delta \to 0} \eta_{\delta}(v_\delta^0) = - \infty$. From that we conclude 
\begin{eqnarray*}
\partial_t \psi + \mu\left(\nabla_x \psi (t^0,x^0)\right) \geq 1.
\end{eqnarray*}
\end{proof}

We now conclude with the proof of the convergence result. For this, we need to input initial conditions. Obviously, one cannot get any uniqueness result for the Hamilton-Jacobi equation \eqref{eq:varHJ} without imposing any initial condition. We now check the initial condition of \eqref{eq:varHJ} in the viscosity sense. 
%
%

\begin{prop}\label{prop:inicond}
If one assumes that $\varphi_0^\eps = \varphi_0$, the sequence $\varphi^\eps$ converges uniformly on compact subsets of $\R_+^* \times \R^n$ to $\varphi^0$, the unique viscosity solution of 
\begin{equation*}
\begin{cases}
\min\left \lbrace \partial_t \varphi^0 + (1+r) \mathcal{H} \left(\frac{\nabla_x \varphi^0}{1+r} \right) + r , \varphi^0 \right\rbrace = 0, & \qquad  (t,x) \in \R_+^* \times \R^n, \medskip \\
\varphi^{0}(0,x) = \underset{v \in V}{\min} \, \varphi_0(x,v) ,& \qquad x \in \R^n.
\end{cases}
\end{equation*}
\end{prop}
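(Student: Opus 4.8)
The plan is to run the half-relaxed limits method to completion by showing that the upper and lower semilimits coincide, which I obtain from a comparison principle for \eqref{eq:varHJ}. Write $u_0(x):=\min_{v\in V}\varphi_0(x,v)$, and set $\overline u(t,x):=\varphi^*(t,x)$ (well defined, i.e. independent of $v$, on $\R_+^*\times\R^n$ by \Cref{prop:semilimits}(i)) and $\underline u(t,x):=\min_{w\in V}\varphi_*(t,x,w)$. By \Cref{prop:semilimits}(ii)--(iii), $\overline u$ is a viscosity subsolution and $\underline u$ a viscosity supersolution of \eqref{eq:varHJ} on $\R_+^*\times\R^n$, and since $\varphi_*\le\varphi^*$ pointwise one has $\underline u\le\overline u$ for free. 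Everything reduces to proving the reverse inequality.

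First I would handle the initial datum, establishing the relaxed initial traces
\begin{equation*}
\limsup_{\substack{t\to 0^+\\ y\to x}}\overline u(t,y)\ \le\ u_0(x)\ \le\ \liminf_{\substack{t\to 0^+\\ y\to x}}\underline u(t,y).
\end{equation*}
The mechanism is the fast relaxation built into \eqref{eq:mainHJeps}: wherever $\varphi^\eps(t,x,v)$ sits strictly above $\min_w\varphi^\eps(t,x,w)$, the quantity $(1+r)\int_V M(v')\big(1-e^{(\varphi^\eps(v)-\varphi^\eps(v'))/\eps}\big)\,dv'$ is strongly negative, so within a boundary layer of width $O(\eps)$ every velocity is driven down onto the minimum, i.e. onto $u_0$. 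To make this rigorous I would build, near each base point $x_0$, explicit super- and sub-solutions of the kinetic equation \eqref{eq:main} whose initial data bracket $\varphi_0$ and whose $\eps\to0$ limits equal $u_0(x_0)$ for every $t>0$; the uniform bound $0\le\rho^\eps\le1$ together with the comparison principle for \eqref{eq:main} (see \cite{cuesta_traveling_2012,bouin_propagation_2015}) transfers these barriers to $\varphi^\eps$, and passing to the semilimits yields the two displayed bounds.

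With the initial ordering in hand, I would then invoke a comparison principle for the obstacle equation \eqref{eq:varHJ}. Writing $F(p):=(1+r)\mathcal{H}(p/(1+r))+r$, the eigenvalue construction of \Cref{def:hamiltonian} makes $F$ convex and continuous, with $F(p)\sim\mu(p)$ as $|p|\to\infty$, hence Lipschitz on bounded sets and coercive; these are exactly the hypotheses under which $\min\{\partial_t u+F(\nabla_x u),\,u\}=0$ admits comparison between a bounded upper semicontinuous subsolution and a bounded lower semicontinuous supersolution ordered at $t=0$ (the Evans--Souganidis framework, cf. \cite{evans_pde_1989,barles_solutions_1994,crandall_users_1992}). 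This gives $\overline u\le\underline u$ on $\R_+\times\R^n$; combined with $\underline u\le\overline u$ it forces $\overline u=\underline u=:\varphi^0$, whence $\varphi_*=\varphi^*$ and $\varphi^\eps$ converges locally uniformly to the $v$-independent limit $\varphi^0$, which the same comparison principle identifies as the unique viscosity solution of \eqref{eq:varHJ}.

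The hard part will be the initial-layer analysis: proving, and not merely asserting formally, that the $O(\eps)$ relaxation collapses all velocities onto $u_0=\min_v\varphi_0$, with enough uniformity to survive the passage to the semilimits. A secondary difficulty is that $\mathcal{H}$ is only Lipschitz rather than $C^1$ across the boundary of $\mathrm{Sing}(M)$, so I must use a version of the comparison principle requiring only continuity and convexity of the Hamiltonian, not its differentiability.
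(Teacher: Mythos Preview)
Your overall architecture (half-relaxed limits $\Rightarrow$ sub- and super-solution of \eqref{eq:varHJ} $\Rightarrow$ comparison principle $\Rightarrow$ local uniform convergence) matches the paper's. The point of divergence is the treatment of the initial condition, and here the paper takes a shorter and more robust route than the one you outline.

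You propose to establish the \emph{pointwise} initial traces
\[
\limsup_{\substack{t\to 0^+\\ y\to x}}\overline u(t,y)\ \le\ u_0(x)\ \le\ \liminf_{\substack{t\to 0^+\\ y\to x}}\underline u(t,y),
\]
by constructing explicit kinetic barriers that quantify the $O(\eps)$ initial-layer relaxation. You correctly flag this as the hard part: the upper bound in particular requires showing that every velocity $v$ is driven down onto $\min_w\varphi_0(\cdot,w)$ within a time layer of width $O(\eps)$, uniformly enough to survive the semilimits. This is plausible but you do not carry it out, and it is not a routine construction.

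The paper avoids this entirely. Instead of pointwise traces, it checks the initial condition \emph{in the viscosity sense} \`a la Barles--Perthame: it shows that $\varphi^*$ satisfies
\[
\min\Big(\min\big\{\partial_t\varphi^*+(1+r)\mathcal{H}\big(\tfrac{\nabla_x\varphi^*}{1+r}\big)+r,\ \varphi^*\big\},\ \varphi^*-u_0\Big)\le 0
\]
on $\{t=0\}$, and the dual ``$\max\ge 0$'' inequality for $\min_w\varphi_*$. The verification is essentially a rerun of the interior perturbed-test-function argument of \Cref{prop:semilimits}, with one extra dichotomy: either the approximating extremal points $(t^\eps,x^\eps,v^\eps)$ have $t^\eps>0$ (and the interior computation applies verbatim) or $t^\eps=0$ along a subsequence (and one reads off the comparison with $u_0$ directly from $\varphi^\eps(0,\cdot,\cdot)=\varphi_0$). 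No kinetic barrier, no initial-layer analysis. The conclusion then follows from a \emph{strong} comparison principle for \eqref{eq:varHJ} that accepts these relaxed viscosity initial conditions, exactly as in the Evans--Souganidis/Barles framework you cite.

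In short: your route is correct in spirit but front-loads a delicate construction (the initial-layer barriers) that the paper's route sidesteps completely by staying within the viscosity formalism at $t=0$. If you were to complete your argument you would need those barriers in detail; alternatively, switching to the viscosity initial conditions lets you recycle the machinery already built in \Cref{prop:semilimits} with almost no new work.
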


\begin{proof}[{\bf Proof of \Cref{prop:inicond}}]
We extend the definition of $\varphi^*$ to $\left\{ t = 0 \right\} \times \R^n$ by the formula
\begin{equation*}
\displaystyle \varphi^*(0,x) = \limsup_{\substack{t \searrow 0^+\\x' \to x}} \varphi^*(t,x').
\end{equation*}
One has to prove the following
\begin{equation}\label{eq:inicondsub}
\min\left(  \min\left \lbrace \partial_t \varphi^* + (1+r) \mathcal{H} \left(\frac{\nabla_x \varphi^*}{1+r} \right) + r , \varphi^* \right\rbrace,   \varphi^* - \min_{v \in V} \varphi_0(\cdot,v)  \right) \leq 0,
\end{equation}
on $\left\{ t = 0 \right\} \times \R^n$ in the viscosity sense. 
%
%
%

Let $\psi \in \mathcal{C}^1 \left( \R^+ \times \R \right)$ be a test function such that $\varphi^*-\psi$ has a strict local maximum at $(t^0 = 0,x^0)$. We now prove that either
\begin{equation*} 
\varphi^*(0,x^0) \leq \min_{v \in V} \varphi_0(x,v),
\end{equation*}
or 
\begin{equation*}
\partial_t \psi + (1+r) \mathcal{H} \left(\frac{\nabla_x \psi}{1+r} \right) + r \leq 0 \qquad \textrm{ when } \qquad \varphi^*(0,x^0) > 0.
\end{equation*}

Suppose then that 
\begin{equation*}
\label{u0x0}
\varphi^*(0,x^0) > \min_{v \in V} \varphi_0(x,v).
\end{equation*}
We shall now prove that 
\begin{equation*}
\partial_t \psi + (1+r) \mathcal{H} \left(\frac{\nabla_x \psi}{1+r} \right) + r \leq 0,
\end{equation*}
since then $\varphi^*(0,x^0) > 0$. We now go through the same steps as for the proof of \Cref{prop:semilimits}, but with slight changes due to the present situation. We keep the same notations. 

\medskip

{\bf \# First case : $p^0(t^0,x^0) \notin\mathrm{Sing}\,M$.} 

\medskip

%
The function $\psi^\eps$ converges uniformly to $\psi$ since $\eta$ is bounded on $V$. Adding this fact to the definition of $\varphi^*(0,x^0)$, we get the existence of a sequence $\left( t^{\eps},x^{\eps},v^{\eps}\right)$ of maximum points of $\varphi^{\eps}-\psi^{\eps}$ satisfying $t^{\eps} > 0$, $(t^{\eps},x^{\eps})\to(0,x^0)$ and such that $\lim_{\eps \to 0} \varphi^{\eps}(t^\eps,x^\eps,v^\eps) = \varphi^*(0,x^0)$. The rest of the proof is similar.

\medskip

{\bf \# Second case : $p^0(t^0,x^0) \in \mathrm{Sing}\,M$.} 

\medskip

\noindent Let $v^*\in \mathrm{Arg} \, \mu(p^0(t^0,x^0))$. As for the previous case, due to the definition of $\varphi^*$, the function $(t,x)\mapsto \phi^\eps (t,x,v^*)-\psi(t,x)$ has a local maximum at a point $(t^\eps,x^\eps)$ satisfying $(t^\eps > 0,x^\eps)\to(t^0,x^0)$ as $\eps\to0$. The conclusion is the same.

\bigskip

\bigskip

We shall now prove that the initial condition for $\min_{w}\varphi_*$ is given by
\begin{equation}\label{eq:inicondsuper}
\max\left( \min\left \lbrace \partial_t \left( \min_{w}\varphi_*\right) + (1+r) \mathcal{H} \left(\frac{\nabla_x \left( \min_{w}\varphi_*\right)}{1+r} \right) + r , \min_{w}\varphi_* \right\rbrace,   \min_{w}\varphi_* - \min_{v \in V} \varphi_0 \right) \geq 0,
\end{equation}
on $\left\{ t = 0 \right\} \times \R^n$ in the viscosity sense.

Let us prove \eqref{eq:inicondsuper}. Let $\psi \in \mathcal{C}^1 \left( \R^+ \times \R \right)$ be a test function such that $\min_{w \in V}\varphi_* - \psi$ has a strict local minimum at $(t^0 = 0,x^0)$. We now prove that either
\begin{equation*} 
\min_{w \in V}\varphi_*(0,x^0,w) \geq \min_{v \in V} \varphi_0(x^0,v),
\end{equation*}
or 
\begin{equation*}
\partial_t \psi + (1+r) \mathcal{H} \left(\frac{\nabla_x \psi}{1+r} \right) + r \geq 0 \qquad \textrm{ and } \qquad \min_{w \in V}\varphi_*(0,x^0,w) \geq 0.
\end{equation*}

Suppose that $\min_{w \in V}\varphi_*(0,x^0,w) < \min_{v \in V} \varphi_0(x^0,v)$. The argument now starts similarly as in the proof above. Let us define the perturbed test function $\psi^{\eps}:=\psi +\eps\eta_\delta$. For any $\delta>0$, the function $\psi^\eps$ converges uniformly to $\psi$ since $\eta_\delta$ is bounded on $V$. Since $\varphi_*(0,x^0,\cdot)$ attains its minimum at, say, $v^0$, we have, for all $v \in V$ and locally in the $(t,x)$ variables, 
\begin{equation*}
\varphi_*(0,x^0,v_0) - \psi(0,x^0) \leq \min_{w \in V} \varphi_*(0,x^0) - \psi(0,x^0) \leq \min_{w \in V} \varphi_*(t,x) - \psi(t,x) \leq \varphi_*(t,x,v) - \psi(t,x),
\end{equation*}
and thus $(0,x^0,v^0)$ is a local minimum of $\varphi_* - \psi$, strict in the $(t,x)$ variables. By the definition of the lower semi-limit, there exists a sequence $\left( t_\delta^{\eps},x_\delta^{\eps},v_\delta^{\eps}\right)$ of minimum points of $\varphi^{\eps}-\psi^{\eps}$ satisfying $(t_\delta^{\eps},x_\delta^{\eps})\to(0,x^0)$. We first claim that there exists a subsequence $(t_{\eps_{k}},x_{\eps_{k}},v_{\eps_{k}})_k$ of the above sequence, with $\eps_k\to 0$ as $k\to \infty$, such that $t_{\eps_{k}} > 0$, for all $k$.

Suppose that this is not true. Then, take a sequence $(x_\delta^{\eps_{k'}},v_\delta^{\eps_{k'}})_{k'}$ such that $(\eps_{k'},x_\delta^{\eps_{k'}})\to (0,x^0)$ and 
that $\varphi^{\eps_{k'}}-\psi^{\eps_{k'}}$ has a local minimum at $\left(0,x_\delta^{\eps_{k'}},v_\delta^{\eps_{k'}}\right)$. It follows that, for all $(t,x,v)$ in some neighborhood of $(0,x_\delta^{\eps_{k'}},v_\delta^{\eps_{k'}})$, we have
\begin{align*}
\min_{v \in V} \varphi_0(x_\delta^{\eps_{k'}},v) - \psi^{\eps_{k'}}\left( 0,x_\delta^{\eps_{k'}},v_\delta^{\eps_{k'}}\right) & \leq \varphi_0(x_\delta^{\eps_{k'}},v_\delta^{\eps_{k'}}) - \psi^{\eps_{k'}}\left( 0,x_\delta^{\eps_{k'}},v_\delta^{\eps_{k'}}\right) \\ &\leq \varphi^{\eps_{k'}} \left( 0,x_\delta^{\eps_{k'}},v_\delta^{\eps_{k'}}\right) - \psi^{\eps_{k'}}\left( 0,x_\delta^{\eps_{k'}},v_\delta^{\eps_{k'}}\right)\\ &\leq \varphi^{\eps_{k'}} \left( t , x , v \right) - \psi^{\eps_{k'}}\left( t , x ,v \right).
\end{align*}
Taking $ \underset{\underset{(t,x,v)\to (0,x^0,v_0)}{k'\to \infty}}{\liminf}$ at the both sides of the inequality, one obtains \begin{equation*}
\min_{v \in V} \varphi_0(x^0,v) - \psi \left( 0 , x^0 \right) \leq \min_{w \in V}\varphi_*(0,x^0) - \psi \left( 0 , x^0 \right).
\end{equation*}
However, this is in  contradiction with $\min_{w \in V}\varphi_*(0,x^0,w) < \min_{v \in V} \varphi_0(x^0,v)$. Now having in hand that this sequence of times $t_{\eps_n}>0$, one can reproduce the same argument as from the proof above along the subsequence $(t_{\eps_n},x_{\eps_n},v_{\eps_n})$.

By the strong uniqueness principle satisfied by \eqref{eq:varHJ} (that is, a comparison principle for discontinuous sub- and super- solutions), we deduce that for all $(t,x,v) \in \R_+^* \times \R^n \times V$,
\begin{equation*}
\min_{w \in V} \varphi_*(t,x,w) \leq \varphi_*(t,x,v) \leq \varphi^*(t,x,v) = \varphi^*(t,x) \leq \min_{w \in V} \varphi_*(t,x,w)
\end{equation*}
We deduce that necessarily all these inequalities are equalities, and thus that $\varphi^\eps$ converges locally uniformly towards $\varphi^0$, independent of $v$, on any subcompact of $\R_+^* \times \R^n$.
\end{proof}

\subsection{Convergence of the macroscopic density $\rho^\eps$}

We prove a convergence result for $\rho^\eps$ in the region $\lbrace \varphi^0 = 0 \rbrace$. Namely

\begin{proposition}\label{prop:zones}
Let $\varphi^\eps$ be the solution of \eqref{eq:mainHJeps}. Then, uniformly on compact subsets of $\text{Int} \left\lbrace \varphi^0 = 0\right\rbrace $, 
\begin{equation*}
\lim_{\eps \to 0} \rho^\eps = 1, \qquad \lim_{\eps \to 0} f^\eps \left( \cdot , v \right) = M(v).
\end{equation*}
\end{proposition}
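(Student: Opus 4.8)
The plan is to reduce the whole statement to a single quantitative bound on $m^\eps(t,x):=\max_{v\in V}\varphi^\eps(t,x,v)\ge 0$ and then to exploit the finite speed of propagation that the bounded velocity set forces on the region where $\varphi^\eps$ is large. First I record the reductions. Since $M$ is a probability density, $\rho^\eps=\int_V f^\eps\,dv\le\int_V M\,dv=1$ for every $\eps$, so the first claim is the lower bound $\liminf_{\eps\to0}\rho^\eps\ge 1$, uniform on a compact $K\subset\mathrm{Int}\{\varphi^0=0\}$. Using $f^\eps=Me^{-\varphi^\eps/\eps}$ and $\int_V M=1$ one has the elementary sandwich
\[ e^{-m^\eps/\eps}\le \rho^\eps\le 1, \]
so it suffices to prove $m^\eps=o(\eps)$ on $K$; the convergence $f^\eps(\cdot,v)\to M(v)$, uniform in $v$, then follows from $Me^{-m^\eps/\eps}\le f^\eps\le M$. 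Equivalently, once $1-\rho^\eps\to 0$ uniformly on a slightly larger set, the deficit $g^\eps:=M-f^\eps\ge 0$ solves $\eps(\partial_t+v\cdot\nabla_x)g^\eps=M(v)(1-\rho^\eps)-(1+r\rho^\eps)g^\eps$, and a Duhamel estimate along the bounded-speed characteristics forces $g^\eps\to 0$ uniformly, up to a vanishing time-layer.

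Next I set up two soft ingredients. Integrating the scaled kinetic equation in $v$ gives the macroscopic balance $\eps\partial_t\rho^\eps+\eps\,\nabla_x\cdot J^\eps=r\rho^\eps(1-\rho^\eps)$, with flux $J^\eps=\int_V v\,f^\eps\,dv$ satisfying $|J^\eps|\le v_{max}$. Testing against $\zeta\in C_c^\infty$ and using the $\eps$ prefactor against the uniform $L^\infty$ bounds on $(\rho^\eps,J^\eps)$ yields $\iint r\rho^\eps(1-\rho^\eps)\zeta\to 0$; since the integrand is nonnegative, $\rho^\eps(1-\rho^\eps)\to 0$ in $L^1_{loc}$, so in the limit $\rho^\eps$ takes only the values $0$ and $1$ (in measure). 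On the other hand, on the open set $\{\varphi^0>0\}$ one has $\min_v\varphi^\eps\ge\varphi^0/2>0$ eventually, whence $\rho^\eps\le e^{-\varphi^0/(2\eps)}\to 0$ locally uniformly. What remains is to exclude the value $0$ on $\mathrm{Int}\{\varphi^0=0\}$ and to upgrade the in-measure statement to a uniform one.

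The key tool is that $m^\eps$ is a viscosity subsolution of a damped eikonal inequality. Writing \eqref{eq:mainHJeps} pointwise as $\partial_t\varphi^\eps+v\cdot\nabla_x\varphi^\eps=1+r\rho^\eps-(1+r)\rho^\eps e^{\varphi^\eps/\eps}$ and evaluating at an argmax $v^\eps$ of $\varphi^\eps(t,x,\cdot)$, the sandwich above gives $\rho^\eps e^{m^\eps/\eps}\ge 1$, hence
\[ \partial_t\varphi^\eps+v^\eps\cdot\nabla_x\varphi^\eps=1+r\rho^\eps-(1+r)\rho^\eps e^{m^\eps/\eps}\le -r(1-\rho^\eps)\le 0. \]
Since $\varphi^\eps(\cdot,v^\eps)$ is smooth and touches $m^\eps$ from below at such a point, the envelope argument makes $m^\eps$ a viscosity subsolution of $\partial_t m^\eps+v^\eps\cdot\nabla_x m^\eps+r(1-\rho^\eps)\le 0$, and a fortiori of $\partial_t m^\eps-v_{max}|\nabla_x m^\eps|\le 0$. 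This simultaneously encodes the finite speed of propagation (at most $v_{max}$) of the set where $\varphi^\eps$ is large and the logistic damping $-r(1-\rho^\eps)$.

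The conclusion $m^\eps=o(\eps)$ on $K$ should then follow from the Hopf–Lax/Duhamel representation attached to this inequality: along a backward cone of slope $v_{max}$ contained in $\mathrm{Int}\{\varphi^0=0\}$, nonnegativity of $m^\eps$ forces the damping integral $r\int(1-\rho^\eps)$ to be at most $\sup_K m^\eps\to 0$ along some admissible path, which together with the $\{0,1\}$ dichotomy pins $\rho^\eps\to 1$. The delicate point, and the main obstacle, is to genuinely localize this to the interior: $\varphi^\eps\to 0$ on $\{\varphi^0=0\}$ does \emph{not} by itself preclude $\varphi^\eps\gg\eps$ (hence $\rho^\eps\to 0$), so one must use the damping to run the logistic saturation $\eps\dot y=ry(1-y)$ from a positive entry density as the front passes, and then propagate the saturated state forward at speed $v_{max}$ without leakage from $\{\varphi^0>0\}$. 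Making the entry density uniformly positive and controlling the free boundary $\partial\{\varphi^0=0\}$ is where the real work lies, and it is precisely the step that uses the comparison principle for \eqref{eq:main} and the finite speed of propagation rather than the Hamilton--Jacobi limit alone.
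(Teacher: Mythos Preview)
Your proposal is not a complete proof: you explicitly leave the ``main obstacle'' --- ruling out $\rho^\eps\to 0$ on $\mathrm{Int}\{\varphi^0=0\}$ and upgrading the in-measure $\{0,1\}$ dichotomy to uniform convergence --- unresolved, and you close by gesturing at comparison-principle and free-boundary arguments that you never carry out. The ingredients you do establish (the sandwich $e^{-m^\eps/\eps}\le\rho^\eps\le 1$, the eikonal subsolution property of $m^\eps$, the $L^1_{\mathrm{loc}}$ vanishing of $\rho^\eps(1-\rho^\eps)$) are correct, but, as you yourself note, they do not by themselves force $\rho^\eps\to 1$ on the interior: nothing in your outline produces the uniformly positive ``entry density'' you need to seed the logistic saturation.

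The paper's argument is far shorter and bypasses all of this machinery; it uses directly the locally uniform convergence $\varphi^\eps\to\varphi^0$ already established in Theorem~\ref{thm:HJlimit}. For $(t^0,x^0)\in\mathrm{Int}\{\varphi^0=0\}$ one takes the paraboloid $\psi^0(t,x)=|x-x^0|^2+(t-t^0)^2$. Since $\varphi^0\equiv 0$ on a neighborhood, $\varphi^0-\psi^0$ has a strict maximum at $(t^0,x^0)$, so there is a sequence $(t^\eps,x^\eps,v^\eps)$ of maxima of $\varphi^\eps-\psi^0$ with $(t^\eps,x^\eps)\to(t^0,x^0)$. At such a maximum the exponent $\varphi^\eps(v^\eps)-\varphi^\eps(v')$ in \eqref{eq:mainHJeps} is nonnegative, so the integral term is $\le 0$ and the equation collapses to $\partial_t\psi^0+v^\eps\cdot\nabla_x\psi^0+r\le r\rho^\eps(t^\eps,x^\eps)$; since $\nabla_{t,x}\psi^0(t^\eps,x^\eps)\to 0$, this gives $\rho^\eps(t^\eps,x^\eps)\ge 1+o(1)$ immediately. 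The maximum-point inequality $\varphi^\eps(t^0,x^0,v)\le\varphi^\eps(t^\eps,x^\eps,v^\eps)+\psi^0(t^\eps,x^\eps)$ then transfers this to $(t^0,x^0)$, and one more pass through the equation (using $(1+r)\rho^\eps e^{\varphi^\eps(v^\eps)/\eps}=1+r\rho^\eps-\partial_t\psi^0-v^\eps\cdot\nabla_x\psi^0\to 1+r$) shows $\varphi^\eps(t^\eps,x^\eps,v^\eps)/\eps\to 0$, hence $f^\eps(t^0,x^0,v)\to M(v)$. No backward cones, eikonal comparison, or logistic ODE arguments are needed; the Evans--Souganidis test-function trick does all the work in a few lines.
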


\begin{proof}[\bf Proof of Proposition \ref{prop:zones}]

We develop similar arguments as in \cite{evans_pde_1989}. Let $K$ be a compact set of $\lbrace \varphi^0=0\rbrace$. Note that it suffices to prove the result when $K$ is a cylinder. Let $(t^0,x^0) \in \text{Int}\left( K \right)$ and the test function 
\begin{equation*}
\forall (t,x) \in K, \qquad \psi^0(t,x) = \vert x - x^0 \vert^2 + \left( t - t^0 \right)^2.
\end{equation*}
Since $\varphi^0 = 0$ on $K$, the function $\varphi^0 -  \psi^0$ admits a strict maximum in $(t^0,x^0)$. The locally uniform convergence of $\varphi^\eps - \psi^0$ gives a sequence $(t^{\eps},x^{\eps},v^{\eps})$ of maximum points with $(t^{\eps},x^{\eps}) \to (t^0,x^0)$ and a bounded sequence $v^{\eps}$ such that at the point $(t^{\eps},x^{\eps},v^{\eps})$ one has:
\begin{equation}\label{nimp}
\partial_t \psi^0 + v^{\eps} \cdot \nabla_x \psi^0 + r \leq r \rho^{\eps}.
\end{equation}
As a consequence, one has, since $r>0$, 
\begin{equation}\label{convepsrho}
\rho^\eps(t^{\eps},x^{\eps}) \geq 1 + o(1), \qquad \textrm{as } {\eps \to 0},
\end{equation}
and then $\lim_{\eps \to 0}  \rho^\eps(t^{\eps},x^{\eps}) = 1$ if one recalls $\rho^\eps \leq 1$ (which, again, is a consequence of the maximum principle). 

However, we need an extra argument to get $\lim_{\eps \to 0}  \rho^\eps(t^0,x^0) = 1$. Since $(t^{\eps},x^{\eps},v^{\eps})$ maximizes $\varphi^\eps - \psi^0$, we deduce that for all $v\in V$, we have
\begin{equation*}
\varphi^\eps\left(t^{\eps},x^{\eps},v^{\eps}\right) - \psi^0(t^\eps,x^\eps) \geq \varphi^\eps\left(t^0,x^0,v\right) - \psi^0(t^0,x^0). 
\end{equation*}
Since $\psi^0(t^\eps,x^\eps) \geq 0$, $ \psi^0(t^0,x^0) = 0$, we find
\begin{equation}\label{eq:subf}
f^\eps(t^0,x^0,v) = M(v) e^{- \frac{\varphi^\eps(t^0,x^0,v)}{\eps}} \geq M(v) e^{- \frac{\varphi^\eps(t^{\eps},x^{\eps},v^{\eps})}{\eps}}. 
\end{equation} 
We shall now prove that $\lim_{\eps \to 0} \eps^{-1} \varphi^\eps(t^{\eps},x^{\eps},v^{\eps}) = 0$. Let us rewrite \eqref{eq:mainHJeps} at the point $(t^{\eps},x^{\eps},v^{\eps})$ in the form
\begin{equation*}
r \rho^\eps(t^\eps,x^\eps)\left( e^{\frac{\varphi^\eps(t^{\eps},x^{\eps},v^{\eps})}{\eps}} - 1\right) = \left(1-\int_V M(v') e^{\frac{\varphi^{\eps}(t^{\eps},x^{\eps},v^{\eps}) -\varphi^{\eps}(t^{\eps},x^{\eps},v')}{\eps}} dv'\right) - \left( \partial_t \psi^0 + v \cdot \nabla_x \psi^0 \right)(t^{\eps},x^{\eps},v^{\eps})
\end{equation*}
We finally deduce using the maximum principle in the latter r.h.s. that
\begin{equation*}
0 \leq r \rho^\eps(t^\eps,x^\eps)\left( e^{\frac{\varphi^\eps(t^{\eps},x^{\eps},v^{\eps})}{\eps}} - 1\right) \leq  o_{\eps \to 0}(1)
\end{equation*}
and thus $\lim_{\eps \to 0} \left( \eps^{-1} \varphi^\eps(t^{\eps},x^{\eps},v^{\eps}) \right) = 0$. This implies $\lim_{\eps \to 0} f^\eps(t,x,v) = M(v)$ locally uniformly on $K \times V$.

\end{proof}

\subsection{Speed of expansion}

To be self-contained, we recall here how to study the propagation of the front after deriving the limit variational equation, in the case $r>0$. From Evans and Souganidis \cite{evans_pde_1989}, we are able to identify the solution of the variational Hamilton-Jacobi equation \eqref{eq:varHJ} using the Lagrangian duality. We emphasize that, in this context, one may assume that our initial condition is well-prepared, \em i.e. \em $\varphi(0,x,v)=\varphi_0(x)$. We recall the equation:
\begin{equation*}
\begin{cases}
\min\left \lbrace \partial_t \varphi + (1+r)\mathcal{H} \left(\frac{\nabla_x \varphi}{1+r} \right) + r, \varphi \right\rbrace = 0, \qquad  \forall (t,x) \in \R_+^* \times \R^n, \medskip\\
\varphi(0,x) = \varphi_0(x).
\end{cases}
\end{equation*}
We recall from \cite{bouin_kinetic_2012,caillerie_large_2017} that the Hamiltonian $\mathcal{H}$ is convex. For any $e_0 \in \mathbb{S}^{n-1}$, we define the minimal speed in that direction by the formula
\begin{equation}\label{eq:linspeed}
c^*(e) = \inf_{\lambda > 0} c(\lambda,e), \quad c(\lambda,e) = \frac{1}{\lambda}  \left( (1+r)\mathcal{H} \left(\frac{\lambda e}{1+r} \right) + r\right).
\end{equation}


We first discuss the speed of propagation of a front-like initial data. 

\begin{proposition}\label{prop:nullsetfront}

Assume that 
\begin{equation*}
\varphi_0(x) := \left\lbrace\begin{array}{lcl}
0& x \in e_0^\bot\\
+ \infty & \text{ else}\\
\end{array}\right.. 
\end{equation*}
Then the nullset of $\varphi$ propagates at speed $c^*(e_0)$ :
\begin{equation*}
\forall t \geq 0, \qquad \left\{ \varphi(t, \cdot ) = 0 \right\} = e_0^\bot + c^*(e_0) t \, e_0.
\end{equation*}
\end{proposition}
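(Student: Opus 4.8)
The plan is to use the Lax--Oleinik / Hopf--Lax variational formula for the viscosity solution of the Hamilton--Jacobi equation with obstacle term. Since $\mathcal{H}$ is convex (as recalled from \cite{bouin_kinetic_2012,caillerie_large_2017}), we may introduce the Lagrangian $L$, the Legendre transform of $p \mapsto (1+r)\mathcal{H}(p/(1+r)) + r$. The solution $\varphi$ of \eqref{eq:varHJ} with the given initial datum admits the representation
\begin{equation*}
\varphi(t,x) = \min_{0 \leq s \leq t} \; \inf_{\substack{\gamma(s) \in e_0^\bot \\ \gamma(t) = x}} \int_s^t L\bigl(\dot\gamma(\tau)\bigr)\, d\tau,
\end{equation*}
where the outer minimum over $s$ encodes the constraint $\varphi \geq 0$ (the front may leave the nullset at any time $s$). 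Because $\varphi_0$ is $\{0, +\infty\}$-valued with nullset $e_0^\bot$, paths must emanate from the hyperplane, and by convexity the optimal path is a straight line with constant velocity. This reduces the double infimum to $\varphi(t,x) = \min_{0\le s\le t} (t-s)\, L\!\left(\frac{x - \Pi x}{t-s}\right)$ where $\Pi$ is projection onto $e_0^\bot$, that is, the cost depends only on the signed distance $d := x\cdot e_0$ to the hyperplane.

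\textbf{Reducing to a one-dimensional speed condition.}
Next I would show that, after the reduction above, the nullset condition $\varphi(t,x) = 0$ is equivalent to a comparison between $d/t$ and $c^*(e_0)$. Writing $a = d/(t-s) \geq 0$ for the normal component of the velocity, one has $\varphi(t,x)=0$ precisely when the minimal cost vanishes, i.e. when there exists $a$ with $L(a\,e_0 + \text{(tangential)}) \leq 0$ reachable within time $t$; optimizing the tangential part gives the effective one-dimensional Lagrangian in the direction $e_0$. The key computation is to verify that the Legendre duality between this effective Lagrangian and $\lambda \mapsto (1+r)\mathcal{H}(\lambda e_0/(1+r)) + r$ turns the threshold $L \leq 0$ into exactly the condition $d \leq c^*(e_0)\, t$, where $c^*(e_0) = \inf_{\lambda>0} \frac{1}{\lambda}\bigl((1+r)\mathcal{H}(\lambda e_0/(1+r)) + r\bigr)$ as in \eqref{eq:linspeed}. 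This is the standard Freidlin--Wentzell--type identity relating the zero level set of the action to the infimum defining $c^*$, and it yields $\{\varphi(t,\cdot)=0\} = e_0^\bot + c^*(e_0)\,t\, e_0$.

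\textbf{Handling the singular Hamiltonian.}
The main obstacle, and where this case genuinely departs from the classical Fisher--KPP setting, is that $\mathcal{H}$ may fail to be differentiable and the infimum defining $c^*$ may be attained at a point where $p \in \mathrm{Sing}(M)$, so that the first-order condition $c'(\lambda^*,e_0)=0$ need not characterize the minimizer. I would address this by working directly with the convex function $c(\cdot,e_0)$ and its infimum rather than with a critical point: convexity of $\mathcal{H}$ guarantees that $\lambda \mapsto c(\lambda,e_0)$ is well-behaved enough for the Legendre transform to be computed through its value and one-sided slopes, and the Lax--Oleinik formula remains valid for convex, possibly non-smooth Hamiltonians. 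Concretely, I expect to verify the two inclusions separately: for $d > c^*(e_0)t$, exhibiting a supersolution (a linear-in-$x$ barrier built from a near-optimal $\lambda$) forcing $\varphi(t,x) > 0$; and for $d < c^*(e_0)t$, exhibiting the explicit path achieving zero cost. The singular regime only affects which $\lambda$ realizes the infimum, not the variational identity itself, so the conclusion survives; care is needed only to ensure the barrier construction uses values of $\mathcal{H}$ on the correct side of $\mathrm{Sing}(M)$.
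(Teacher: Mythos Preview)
Your approach is essentially the same as the paper's: both rely on the Hopf--Lax/Lax--Oleinik representation with the Legendre dual of $p \mapsto (1+r)\mathcal{H}(p/(1+r)) + r$, and both identify the nullset via the equivalence $\mathcal{L}(\xi/t)\le 0 \Longleftrightarrow |\xi|\le c^*(e_0)\,t$, which follows directly from the definition $c^*(e_0)=\inf_{\lambda>0}c(\lambda,e_0)$. The only cosmetic differences are that the paper first invokes translation invariance in $e_0^\bot$ to reduce to a genuinely one-dimensional Hamilton--Jacobi equation before applying Hopf--Lax (whereas you project after), and that the paper uses the Freidlin representation $\varphi=\max(J,0)$ rather than your equivalent formulation with a free departure time $s$.

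Your third block on the singular Hamiltonian is more caution than the argument requires here. The Hopf--Lax formula and the Legendre-duality computation $\sup_q(q\xi - |q|c(|q|,e_0)t)\le 0 \Leftrightarrow |\xi|\le c^*(e_0)t$ use only convexity of $\mathcal{H}$, not differentiability or a first-order condition at $\lambda^*$; the paper accordingly makes no case distinction in this proposition. The barrier/sub-supersolution alternative you sketch would also work but is unnecessary---the singularity of $\mathcal{H}$ becomes a genuine issue elsewhere (for the minimality of the travelling-wave speed), not in this computation.
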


\begin{proof}[\bf Proof of Proposition \ref{prop:nullsetfront}]

We first notice that since the initial data is invariant under any translation in $e_0^\bot$, and the  the equation \eqref{eq:varHJ} invariant by translation, the solution $\varphi$ depends only on $x\cdot e_0$. That is $\varphi(t,x) = \varphi(t,(x \cdot e_0)e_0) = \overline{\varphi}(t,x \cdot e_0)$.  
The function $\overline{\varphi}$ satisfies
\begin{equation*}
\begin{cases}
\min\left \lbrace \partial_t \overline{\varphi} + (1+r)\mathcal{H} \left(\frac{\partial_\xi \overline{\varphi}}{1+r} e_0 \right) + r, \varphi \right\rbrace = 0, \qquad  \forall (t,\xi) \in \R_+^* \times \R, \medskip\\
\overline{\varphi}(0,\xi) = \overline{\varphi}_0(\xi).
\end{cases}
\end{equation*}
where 
\begin{equation*}
\overline{\varphi}_0(\xi) := \left\lbrace\begin{array}{lcl}
0& \xi = 0,\\
+ \infty & \text{ else}\\
\end{array}\right.. 
\end{equation*}
The Lagrangian associated to the latter Hamilton-Jacobi equation is by definition 
\begin{align*}
 \mathcal{L}(p) &= \sup_{q \in \R} \left( p q - (1+r)\mathcal{H}\left(\frac{q}{1+r} e_0\right) - r  \right),\\
&= \sup_{q \in \R} \left( p q - (1+r)\mathcal{H}\left(\frac{\vert q \vert}{1+r} e_0\right) - r  \right), \\
&= \sup_{q \in \R} \left( p q - \vert q \vert c(\vert q \vert,e_0) \right),
\end{align*}
To solve the variational Hamilton-Jacobi equation, let us define
\begin{equation*}
J(x,t) = \inf_{x \in X} \left \lbrace \int_{0}^{t}  \left[ \mathcal{L}( \dot x ) \right] ds \,  \big\vert \, x(0) = x, x(t) = 0 \right \rbrace 
\end{equation*}
the minimizer of the action associated to the Lagrangian. Thanks to the so-called Freidlin condition, see \cite{evans_pde_1989,freidlin_functional_1985} we deduce that the solution of \eqref{eq:varHJ} is 
\begin{equation*}
\overline{\varphi}(t,\xi) = \max \left( J(\xi,t) , 0 \right) . 
\end{equation*}
The Hopf-Lax formula gives $J(\xi,t) = t \mathcal{L}\left( t^{-1}\xi\right)$
thanks to the assumption on the initial condition. Hence,
\begin{align*}
\xi \in \left\{ \overline{\varphi}(t, \cdot ) = 0 \right\} \Longleftrightarrow \mathcal{L}\left(t^{-1}\xi\right) \leq 0 \quad \Longleftrightarrow &\quad \sup_{q \in \R} \left( q \xi - \vert q \vert c(\vert q \vert, e_0) t \right) \leq 0, \\
\quad \Longleftrightarrow &\quad \forall q  \in \R, \quad q \xi - \vert q \vert c(\vert q \vert,e_0) t \leq 0,\\
\quad \Longleftrightarrow &\quad  \vert \xi \vert \leq  c^*(e_0) t.
\end{align*}
We deduce the result for $\varphi$ by changing the variables back.
\end{proof}

For a compactly supported initial data, the issue of the speed of propagation in general is more involved, since different directions may have different speeds of propagation. Namely, the following Freidlin-G\"artner formula holds:

\begin{proposition}\label{prop:nullsetfreidlin_functional_1985}
Assume that 
\begin{equation*}
\varphi_0(x) := \left\lbrace\begin{array}{lcl}
0& x=0\\
+ \infty & \text{ else}\\
\end{array}\right.. 
\end{equation*}
Define 
\begin{equation*}
w^*(e_0) = \min_{\substack{e \in \mathbb{S}^{n-1}\\e_0 \cdot e > 0}}\left( \frac{c^*(e)}{e_0 \cdot e}\right).
\end{equation*}
Then the nullset of $\varphi$ propagates at speed $w^*(e_0)$ in the direction $e_0$ :
\begin{equation*}
\forall t \geq 0, \qquad \left\{ x \in \R, \; \varphi(t,x \, e_0) = 0 \right\} = \left\{ x \in \R, \; \vert x \vert \leq w^*(e_0) t  \right\}.
\end{equation*}

\end{proposition}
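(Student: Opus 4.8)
The plan is to reuse the Lagrangian duality of \Cref{prop:nullsetfront}, but now for the genuinely $n$-dimensional problem: since the source $\varphi_0$ is concentrated at a single point rather than on a hyperplane, there is no translation symmetry to reduce the dimension, and one must work with the full Hamiltonian $H(p) := (1+r)\mathcal{H}\!\left(\tfrac{p}{1+r}\right) + r$ together with its Legendre transform
\[
\mathcal{L}(q) = \sup_{p \in \R^n}\left( p\cdot q - H(p)\right).
\]
Exactly as in the one-dimensional case, the Freidlin condition of Evans and Souganidis \cite{evans_pde_1989,freidlin_functional_1985} combined with the Hopf--Lax formula for the concentrated datum $\varphi_0$ yields the representation $\varphi(t,x) = \max\!\left( t\, \mathcal{L}(x/t),\, 0\right)$, whose uniqueness is guaranteed by \Cref{thm:HJlimit}. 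Consequently $\{x : \varphi(t,x)=0\} = \{x : \mathcal{L}(x/t)\le 0\}$, and everything reduces to describing the sublevel set $\{q : \mathcal{L}(q)\le 0\}$ along the ray $q = z\,e_0$.

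The core of the argument is an algebraic identity linking $\mathcal{L}$ to the directional speeds $c^*$. By definition of the Legendre transform, $\mathcal{L}(z e_0)\le 0$ is equivalent to $z\,(p\cdot e_0)\le H(p)$ for every $p\in\R^n$. Writing $p=\lambda e$ with $\lambda>0$ and $e\in\mathbb{S}^{n-1}$, and recalling from \eqref{eq:linspeed} that $H(\lambda e)=\lambda\,c(\lambda,e)$, this reads $z\,(e\cdot e_0)\le c(\lambda,e)$ for all $\lambda>0$ and $e$ (the value $p=0$ giving the harmless $0\le r$). Since the left-hand side is independent of $\lambda$, taking the infimum over $\lambda>0$ replaces $c(\lambda,e)$ by $c^*(e)$, so that
\[
\mathcal{L}(z e_0)\le 0 \iff z\,(e\cdot e_0)\le c^*(e)\quad\text{for all } e\in\mathbb{S}^{n-1}.
\]

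To read off the speed, I would fix $z>0$ and observe that $c^*(e)>0$ for every $e$: indeed, Jensen's inequality applied to \eqref{eq:defHimplicit} together with $\langle v\rangle_M=0$ gives $\mathcal{H}\ge 0$, hence $H\ge r$ and $c^*(e)>0$. Therefore the constraints coming from directions with $e\cdot e_0\le 0$ are automatically satisfied, and only those with $e\cdot e_0>0$ are binding, yielding
\[
z\le \min_{\substack{e\in\mathbb{S}^{n-1}\\ e\cdot e_0>0}}\frac{c^*(e)}{e\cdot e_0}=w^*(e_0),
\]
the minimum being attained by compactness of $\{e:e\cdot e_0\ge\delta\}$ and continuity of $c^*$. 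Setting $z=\xi/t$ then identifies the nullset along the positive ray as $\{\xi\ge 0:\varphi(t,\xi e_0)=0\}=[0,w^*(e_0)t]$; the opposite ray is treated by the same computation (with the constraints $e\cdot e_0<0$ now binding), and under the symmetry relevant to the statement this produces the symmetric interval claimed.

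I expect the main obstacle to be the rigorous justification of the representation formula $\varphi=\max(t\mathcal{L}(x/t),0)$: one must check that the obstacle term $\min\{\cdots,\varphi\}$ in \eqref{eq:varHJ}, which encodes the reaction, combines with the Hopf--Lax value function precisely through the Freidlin condition, and that \Cref{thm:HJlimit} pins down $\varphi$ as this unique solution. A secondary technical point is the interchange $\sup_p\leftrightarrow\inf_\lambda$ in the key identity: one should either verify that $c^*(e)$ is attained at some $\lambda^*(e)>0$ or argue directly with the infimum, and confirm the strict positivity $c^*>0$ used to discard the non-binding directions.
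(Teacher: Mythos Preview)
Your proposal is correct and follows essentially the same route as the paper: both compute the Legendre transform of $H(p)=(1+r)\mathcal{H}(p/(1+r))+r$, invoke the Freidlin condition and Hopf--Lax formula to get $\varphi(t,x)=\max(t\mathcal{L}(x/t),0)$, and then rewrite $\mathcal{L}(ze_0)\le 0$ via the polar decomposition $p=\lambda e$ and the identity $H(\lambda e)=\lambda c(\lambda,e)$ to reduce to $z(e\cdot e_0)\le c^*(e)$ for all $e$. You are in fact slightly more careful than the paper in justifying why only the directions with $e\cdot e_0>0$ are binding (via $c^*>0$) and in treating the negative ray separately, points the paper passes over in its final chain of equivalences.
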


\begin{proof}[\bf Proof of Proposition \ref{prop:nullsetfreidlin_functional_1985}]

The Lagrangian is by definition 
\begin{align*}
 \mathcal{L}(p) &= \sup_{q \in \R^n} \left( p \cdot q - (1+r)\mathcal{H}\left(\frac{q}{1+r}\right) - r  \right), \\
&= \sup_{e \in \mathbb{S}^{n-1}} \sup_{\lambda \in \R^+} \left( \lambda p \cdot e - \left[ (1+r)\mathcal{H}\left(\frac{\lambda e}{1+r}\right) + r \right] \right),\\
&= \sup_{e \in \mathbb{S}^{n-1}} \sup_{\lambda \in \R^+} \left( \lambda \left[ p \cdot e - c(\lambda,e) \right]\right),
\end{align*}
To solve the variational Hamilton-Jacobi equation, let us define
\begin{equation*}
J(x,t) = \inf_{x \in X} \left \lbrace \int_{0}^{t}  \left[ \mathcal{L}( \dot x ) \right] ds \,  \big\vert \, x(0) = x, x(t) = 0 \right \rbrace 
\end{equation*}
the minimizer of the action associated to the Lagrangian. Thanks to the so-called Freidlin condition, see \cite{evans_pde_1989,freidlin_functional_1985} we deduce that the solution of \eqref{eq:varHJ} is 
\begin{equation*}
\varphi(x,t) = \max \left( J(x,t) , 0 \right) . 
\end{equation*}
The Lax formula gives 
\begin{equation*}
J(x,t) = \min_{y \in \R^n} \left\{ t \mathcal{L}\left( \frac{x-y}{t} \right) + \varphi_0(y) \right\} = t \mathcal{L}\left( \frac{x}{t} \right)
\end{equation*}
thanks to the assumption on the initial condition. Hence,
\begin{align*}
\varphi(t, x e_0 ) = 0  \Longleftrightarrow \mathcal{L}\left(\frac{x}{t} e_0 \right) \leq 0 \quad  \Longleftrightarrow &\quad \sup_{e \in \mathbb{S}^{n-1}} \sup_{\lambda \in \R^+} \left( \lambda \left[ x (e_0 \cdot e) - c_e(\lambda) t \right]\right) \leq 0, \\
\quad \Longleftrightarrow &\quad \forall \lambda  \in \R^+, \forall e \in \mathbb{S}^{n-1}, \quad  \lambda \left[ (x\cdot e_0)(e_0 \cdot e) - c_e(\lambda) t \right] \leq 0,\\
\quad \Longleftrightarrow &\quad  \forall e \in \mathbb{S}^{n-1}, \quad  x(e_0 \cdot e) \leq  c^*(e) t\\
\quad \Longleftrightarrow &\quad  \vert x\vert \leq  \min_{\substack{e \in \mathbb{S}^{n-1}\\e_0 \cdot e > 0}}\left( \frac{c^*(e)}{e_0 \cdot e}\right) t = w^*(e_0) t.
\end{align*}
\end{proof}

\section{Existence of travelling waves and spreading result}\label{sec:TW}

In this Section, we now explain how to construct travelling wave solutions to \eqref{eq:main}. We will follow closely the construction in \cite{bouin_propagation_2015}. As is classical in this type of Fisher-KPP problems, the speeds of propagation are given by studying the linearized problem at infinity. As we will see later on, the main difference that has motivated this paper is the possible singularity of $c(\lambda,e)$ at $\lambda^*(e)$. 


\subsection{Proof of Theorem \ref{thm:existence-tw} : Travelling wave solutions}

Given a direction $e \in \mathbb{S}^{n-1}$, looking for  exponential solutions to the linearized problem of the form $e^{-\lambda\left(x\cdot e - c(\lambda,e) t\right)} F_{\lambda,e}(v)$ for any positive $\lambda$ is exactly looking for solutions to 
\begin{equation*}
\left[ 1 + \lambda ( c(\lambda,e) - v \cdot e) \right] F_{\lambda,e}(v) = (1+r) M(v) \int_V F_{\lambda,e}(v') dv', \qquad v \in V.
\end{equation*}
In view of earlier computations, it boils down to setting $c(\lambda,e)$ as in \eqref{eq:linspeed} and $F_{\lambda,e} := \tilde Q_{\frac{\lambda e}{1+r}}$ as in \eqref{eq:profile}. 

Recall that $\frac{\lambda e}{1+r} \in \text{Sing}(M)$ if and only if $l(e)\leq \frac{\lambda}{1+r}$, that is $\lambda \geq \tilde\lambda(e) := (1+r) l(e)$. Thus, for $\lambda \leq \tilde\lambda(e)$, the function $c(\lambda,e)$ is convex and regular, and the profile is explicitly given by
\begin{equation*}
F_{\lambda,e}\left(v\right)=\frac{(1+r) M\left(v\right)}{1+ \lambda ( c(\lambda,e) - v\cdot e)} > 0.
\end{equation*}
For $\lambda \geq \tilde \lambda(e)$, that is to say $\frac{\lambda e}{1+r} \in \text{Sing}(M)$ one has $c(\lambda,e) = \overline{v}(e) - \frac{1}{\lambda}$ which is concave and increasing. As such, the infimum of $\lambda \mapsto c(\lambda,e)$ is attained for a $\lambda \leq \tilde\lambda(e)$, which we denote $\lambda^*(e)$. As a consequence, the minimal speed $c^*(e)$ is always associated to an integrable eigenvector, since if $\lambda^*(e) = \tilde \lambda(e)$, one has
\begin{equation*}
F_{\tilde\lambda(e),e}\left(v\right) = \frac{(1+r) M\left(v\right)}{\tilde\lambda(e) \left( \bar v(e) - v\cdot e \right)},
\end{equation*}
with $\int_V F_{\tilde\lambda(e),e}\left(v\right) dv = 1$ thanks to the definition of $\tilde\lambda(e)$. 

Given a direction $e \in \mathbb{S}^{n-1}$, we shall now discuss the type of functions $\lambda \mapsto c(\lambda,e)$ that may arise from this problem. Qualitatively, four situations may happen. The first possibility is the one already appearing in \cite{bouin_propagation_2015} in the mono-dimensional case, that is $\tilde{\lambda}(e)=+\infty$ and thus $\mathrm{Sing}(M)=\emptyset$. We plot an exemple of this case in Figure \ref{fig:Shape}, case 1. If $\tilde{\lambda}(e) < +\infty$, three supplementary situations can occur. Either the infimum of $\lambda \mapsto c(\lambda,e)$ is attained for $\lambda<\tilde{\lambda}(e)$, as shown in Figure \ref{fig:Shape}, case 2, either it is attained for $\lambda=\tilde{\lambda}(e)$. In the latter case, the infimum can either be attained at a point where the left derivative of $c(\lambda,e)$ is zero (Figure \ref{fig:Shape}, case 3), or where it is negative (Figure \ref{fig:Shape}, case 4).

\begin{figure}[htbp]
\label{fig:Shape}
\begin{center}
\subfigure[Case $1$]
{
\includegraphics[width=0.40\linewidth]{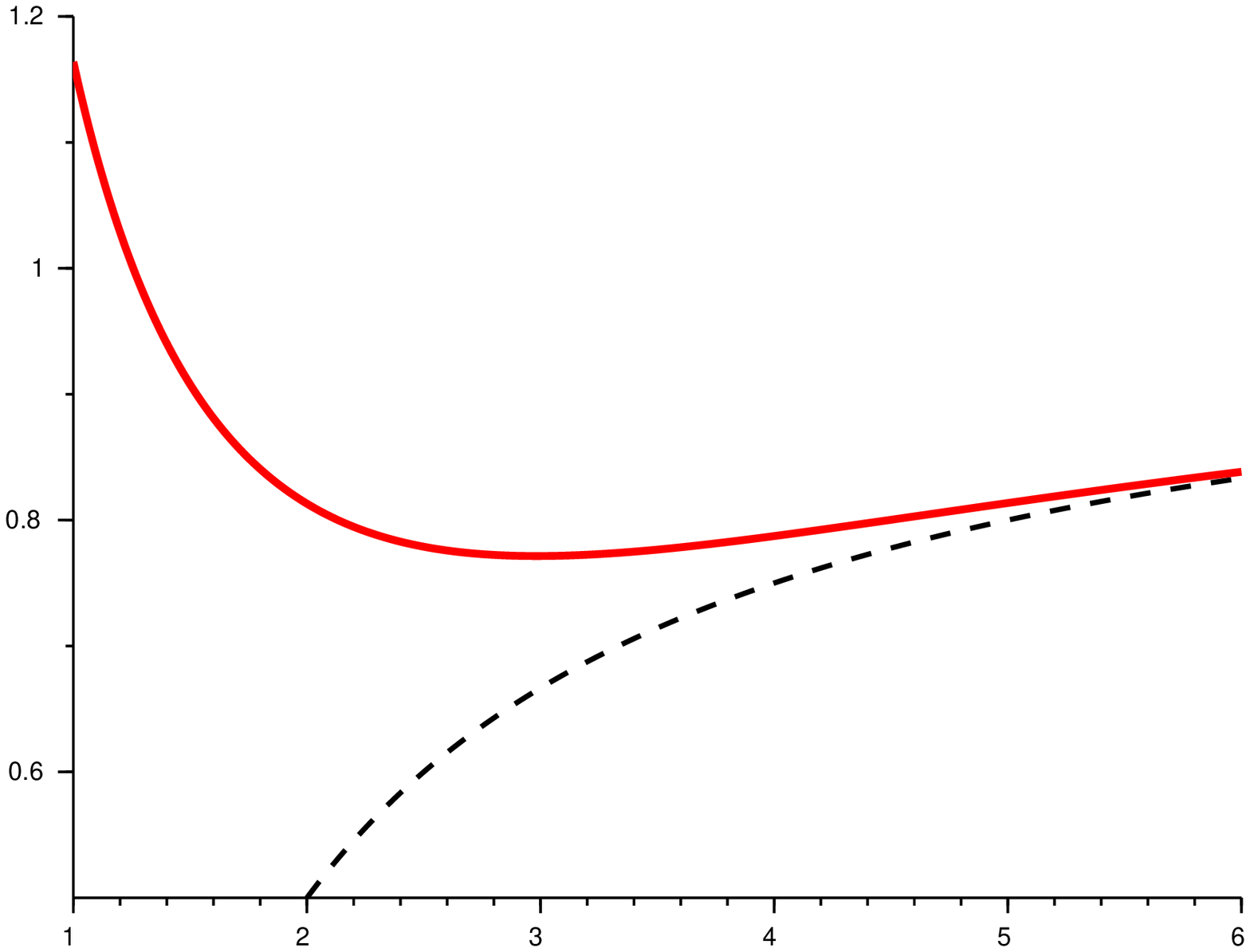} 
} 
\subfigure[Case $2$]
{
\includegraphics[width=0.40\linewidth]{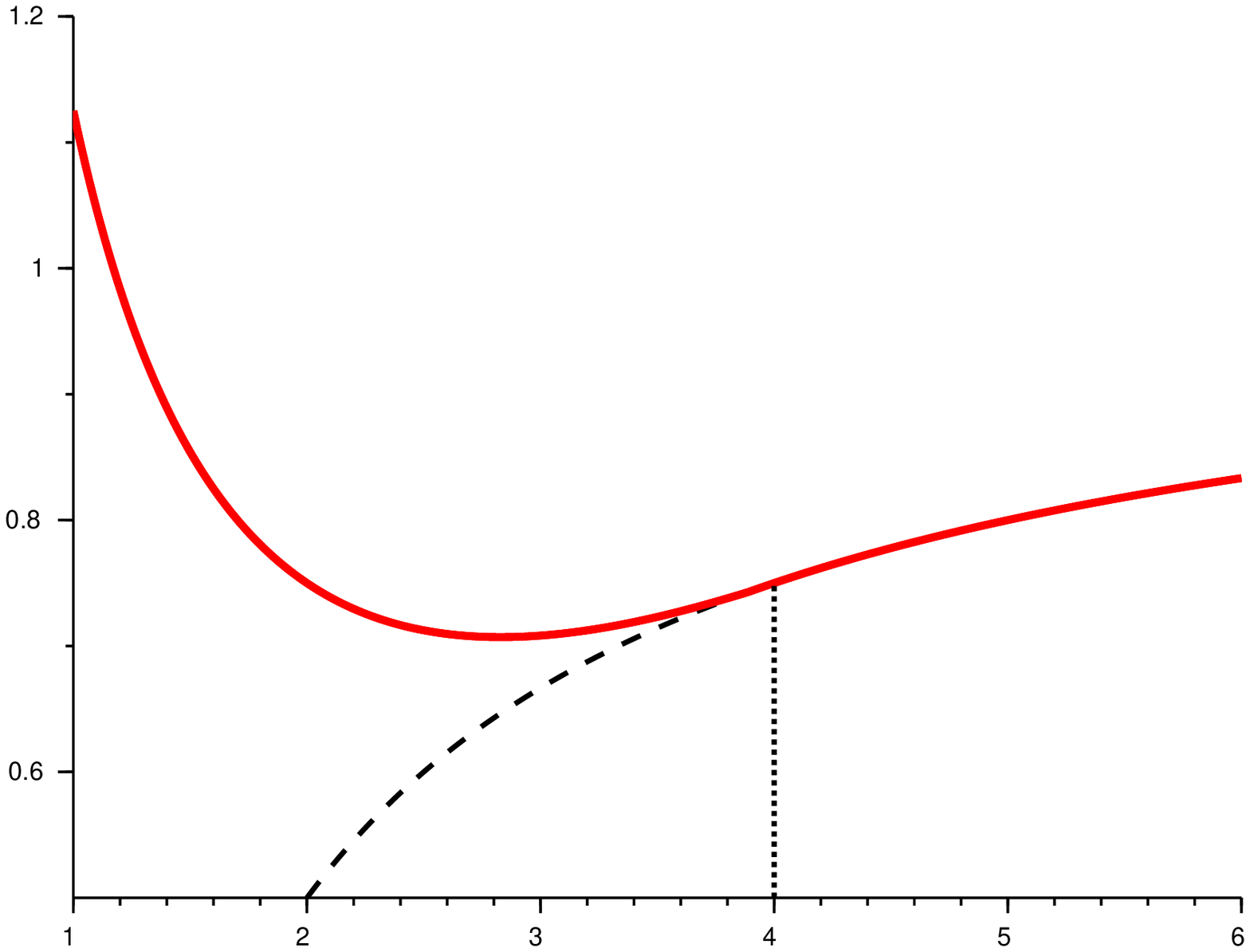}
} 
\subfigure[Case $3$]
{
\includegraphics[width=0.40\linewidth]{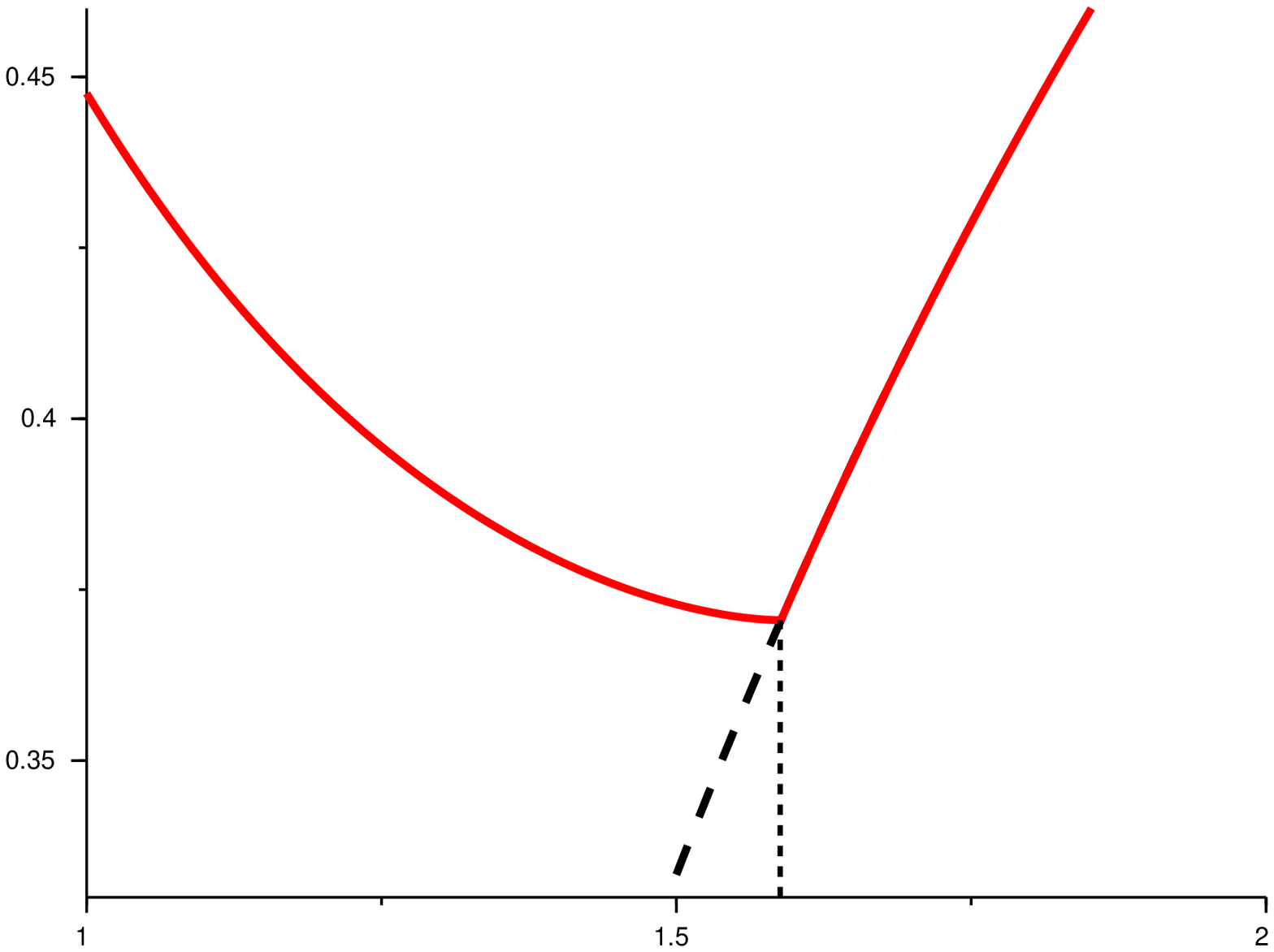}
} 
\subfigure[Case $4$]
{
\includegraphics[width=0.40\linewidth]{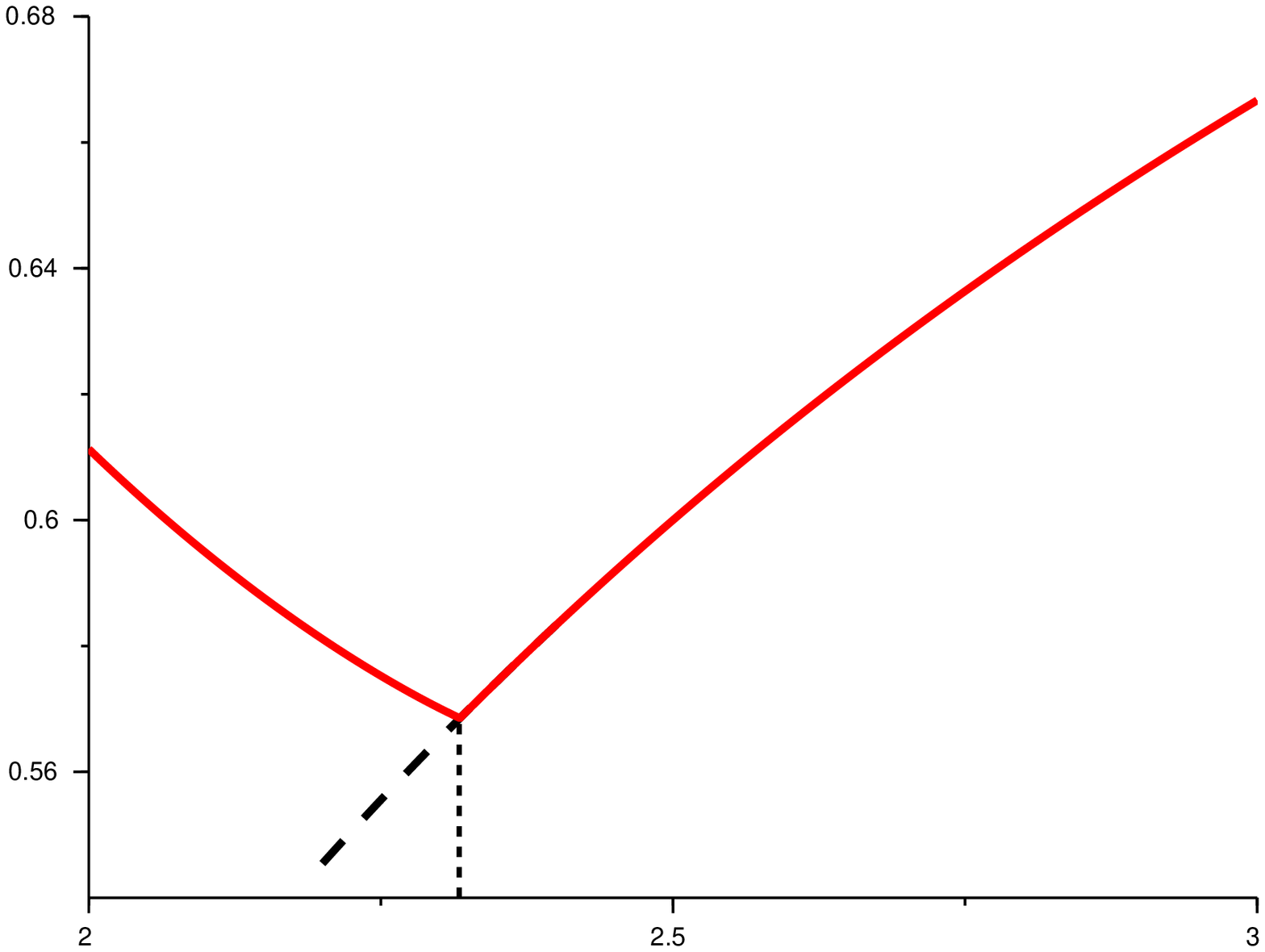}
} 
\end{center}
\caption{Various cases of speed functions $\lambda \mapsto c(\lambda,e)$. Red plain line: $\lambda \mapsto c(\lambda,e)$. Black dotted line: $\lambda \mapsto \bar v(e) - \frac{1}{\lambda}$. (a) $n=1$, $V=[-1,1]$, $e=1$, $M\equiv \frac{1}{2}$ and $r=1$. In this case, $\mathrm{Sing}(M)=\emptyset$ so that the function $\lambda\mapsto c(\lambda,1)$ is regular. This is the case discussed in \cite{bouin_propagation_2015}. (b) $n=2$, $V=D(0,1)$, $M\equiv \frac{1}{\pi}$ and $r=1$. In this case, $\mathrm{Sing}(M)\neq \emptyset$ but the minimum of $c(\lambda,e_1)$ is attained for $\lambda<\tilde{\lambda}(e_1)=4$. (c) $n=1$, $V=[-1,1]$, $M(v)=\frac{3}{2}(1-|v|)^2$ and $r=-1+l(1)^{-2}\int_{-1}^1 \frac{M(v)}{(1-v)^2}dv\approx 0.37$. In this case, the minimum of $c(\lambda,e_1)$ is attained for $\lambda=\tilde{\lambda}(e_1)$, with a zero left derivative. Numerically $\tilde{\lambda}(1)\approx 1.58$. (d) $n=1$, $V=[-1,1]$, $M(v)=\frac{3}{2}(1-|v|)^2$ and $r=1$. In this case, the minimum of $c(\lambda,1)$ is attained for $\lambda=\tilde{\lambda}(1)$, with a negative left derivative. Numerically, $\tilde{\lambda}(1)\approx 2.31$.}
\end{figure}

\begin{rem}
One can get a criterion to check which case holds. The dispersion relation defining $c(\lambda,e)$ on $(0,\tilde\lambda(e))$ is 
\begin{equation*}
\mathcal{I}(\lambda,c(\lambda,e),e) = 1,
\end{equation*}
where
\begin{equation}\label{dispersionrelation}
\mathcal{I}(\lambda,c,e):=\int_V \frac{(1+r) M\left(v\right)}{1+ \lambda ( c - v\cdot e)} dv.
\end{equation}
Differentiating with respect to $\lambda$, we find 
\begin{equation*}
\int_V \frac{\lambda c'(\lambda,e) M\left(v\right)}{\left[1+ \lambda ( c(\lambda,e) - v\cdot e)\right]^2} dv + \int_V \frac{( c(\lambda,e) - v\cdot e)M\left(v\right)}{\left[1+ \lambda ( c(\lambda,e) - v\cdot e)\right]^2} dv = 0
\end{equation*}
Recalling $\int_V \frac{M\left(v\right)}{1+ \lambda ( c(\lambda,e) - v\cdot e)} dv = (1+r)^{-1}$ and defining
\begin{equation*}
\mathcal{J}(\lambda,e) = \int_V \frac{M\left(v\right)}{\left[1+ \lambda ( c(\lambda,e) - v\cdot e)\right]^2} dv,
\end{equation*}
we get 
\begin{equation*}
c'(\lambda,e) = \left( 1 - \frac{(1+r)^{-1}}{J(\lambda,e)} \right) \frac{1}{\lambda^2}.
\end{equation*}
As such, computing the value of $\lim_{\lambda \to \tilde\lambda(e)^-} \mathcal{J}(\lambda,e)$ allows to know in which case one falls. Indeed, the function $\lambda \mapsto c(\lambda)$ attains its minimum at $\tilde\lambda(e)$ if and only if $c'\left(\tilde\lambda^-(e)\right) \leq 0$, which is equivalent to $\mathcal{J}(\tilde\lambda(e)) \leq (1+r)^{-1}$ which is in turn equivalent to
\begin{equation}\label{eq:square}
\int_V \frac{M\left(v\right)}{\left( \bar v(e) - v\cdot e \right)^2} dv \leq (1+r) l(e)^2 ,
\end{equation} 
which can be checked case by case. Note that one has always, given the Cauchy-Schwarz inequality, $l(e)^2 \leq \int_V \frac{M\left(v\right)}{\left( \bar v(e) - v\cdot e \right)^2} dv$.
\end{rem}

\begin{example}
Let us look back at \Cref{ex:annulebord}. As was stated, $l(1)=3(2\ln(2)-1)$ and $\int_V \frac{M(v)}{(1 - v)^2}dv=6(1-\ln(2))<+\infty$. Thus, for $r >-1+ l(1)^{-2}\int \frac{M(v)}{(1 - v)^2}dv>0$, the condition (\ref{eq:square}) is satisfied so the minimum of $\lambda \mapsto c(\lambda,e)$ is attained at $\tilde{\lambda}(e)$. For $r=-1 + l(1)^{-2}\int_V \frac{M(v)}{(1 - v)^2}dv$ the minimum has its left derivative equal to 0 (\em i.e. \em $\lambda^*(1)=\tilde{\lambda}(1)$). We illustrate those results in \cref{fig:Shape}, case 3 and 4.
\end{example}

Since $c(\lambda,e)$ tends to infinity when $\lambda$ tends to $0$, for any $c \geq c^*(e)$ one can find $\lambda \in (0, \tilde\lambda(e)]$ such that $c(\lambda,e) = c$.

Fix $c\in \left( c^*(e), \overline{v}(e)  \right)$. Denote $\lambda_c$ is the smallest solution in $(0,\tilde\lambda(e))$ of $c(\lambda_c,e) = c$. Notice that by construction it is possible to obtain $F_{\lambda_c,e}$ integrable and bounded (bounded since $c > c^*(e))$, the proof of \cite{bouin_propagation_2015}, Section 3.2, that constructs sub and super solutions for \eqref{eq:main} is unchanged. From the construction of a pair of sub- and super-solutions, we deduce the existence of travelling wave solutions exactly as in \cite{bouin_propagation_2015}, by a monotonicity method when $c > c^*(e)$ and passing to the limit $c \to c^*(e)$ to get the case $c=c^*(e)$.

The main difference between the mono-dimensional case of \cite{bouin_propagation_2015} and the higher dimensional case comes here. It is rather non-standard and interesting that the function giving the speed of propagation could be singular at its minimum value. 

To prove that $c^*$ is still the minimal speed of propagation, the arguments used in \cite[Lemma 3.10]{bouin_propagation_2015} are not applicable. These arguments can be summarized as follows : in the one dimensional case when $M\geq\delta>0$, the function $\lambda \mapsto \mathcal{I}(\lambda,c,e)$ (recall (\ref{dispersionrelation})) is analytic. Thus, we can not find $\lambda>0$ such that $\mathcal{I}(\lambda,c,e)=1$ when $c<c^*$. However, an argument using the Rouch\'e Theorem states that we can solve this problem in $\mathbb{C}\setminus \R$. Assuming that there exists a travelling wave solution $f$ for $c<c^*$, we then can use such a $\lambda\in \mathbb{C}$ to construct a subsolution under $f$ which dos not converge to 0 as $x\to \infty$. In our framework, the function $\lambda\mapsto\mathcal{I}(\lambda,c,e)$ might not be analytic around $\lambda^*(e)$, which prevents us from using this technique. We thus choose to use the Hamilton-Jacobi framework combined to the comparaison principle.

We now prove the following lemma.

\begin{lem}\label{lem:minspeed}
Let $f$ be a travelling wave solution to \eqref{eq:main} in the direction $e \in \mathbb{S}^{n-1}$, with speed $c$. Then $c\geq c^*(e)$.
\end{lem}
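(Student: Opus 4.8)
The plan is to argue by contradiction and to replace the one–dimensional Rouch\'e argument of \cite{bouin_propagation_2015}, which is unavailable here because $\lambda\mapsto c(\lambda,e)$ may fail to be analytic at its minimizer $\lambda^*(e)$, by a comparison argument fed with the Hamilton--Jacobi spreading results of \Cref{sec:HJ}. Suppose that a positive travelling wave $f(t,x,v)=\tilde f(x\cdot e-ct,v)$ exists with $0\le c<c^*(e)$, and set $f_0(x,v):=\tilde f(x\cdot e,v)$. First I would build a nontrivial, compactly supported (in the direction $e$) lower bound for $f_0$. Since $\tilde f(z,\cdot)\to M$ as $z\to-\infty$ and $\int_V M=1$, the set $\{v:M(v)>0\}$ has positive measure; by Egorov's theorem there exist a set $V'\subset V$ of positive measure, a constant $A>0$ and $\gamma\in(0,1)$ such that $\tilde f(z,v)\ge\gamma M(v)$ for all $z\le -A$ and all $v\in V'$. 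Hence, with the slab $S:=\{x:\,-A-1\le x\cdot e\le -A\}$, the datum $g_0(x,v):=\gamma M(v)\mathbf 1_S(x)\mathbf 1_{V'}(v)$ satisfies $g_0\le f_0\le M$. Letting $g$ denote the solution of \eqref{eq:main} with datum $g_0$, the comparison principle for \eqref{eq:main} gives $g\le f$ on $\R_+\times\R^n\times V$.

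Next I would show that $g$ invades at speed $c^*(e)$. Performing the hyperbolic rescaling $g^\eps(t,x,v)=g(t/\eps,x/\eps,v)$ and the Hopf--Cole change $g^\eps=Me^{-\psi^\eps/\eps}$, the rescaled phase solves \eqref{eq:mainHJeps}, and its initial datum $\psi^\eps(0,\cdot)$ converges to the obstacle front datum $\varphi_0$ which equals $0$ on $e^\bot$ and $+\infty$ otherwise (the slab $S$ shrinks to the hyperplane $\{x\cdot e=0\}$ under the scaling, and $V'\ne\emptyset$ guarantees $\min_v\varphi_0=0$ there). By \Cref{thm:HJlimit}, $\psi^\eps\to\psi^0$, the viscosity solution of \eqref{eq:varHJ} with this datum, and by \Cref{prop:nullsetfront} its nullset is the slab $\{|x\cdot e|\le c^*(e)\,t\}$. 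Fix $c'\in(c,c^*(e))$; then $(1,c'e)\in\mathrm{Int}\{\psi^0=0\}$, so \Cref{prop:zones} yields $g^\eps(1,c'e,v)\to M(v)$, that is, undoing the scaling with $T=1/\eps$,
\[
\lim_{T\to\infty} g(T,\,c'Te,\,v)=M(v)\qquad\text{for all }v\in V.
\]

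Finally I would read off the contradiction. Along the same moving point the wave gives $f(T,c'Te,v)=\tilde f\big((c'-c)T,v\big)$, and since $c'>c$ the argument $(c'-c)T\to+\infty$, so $\tilde f\big((c'-c)T,v\big)\to 0$ by \eqref{eq:deftw}. Combining with $f\ge g$ we obtain $0=\lim_{T} f(T,c'Te,v)\ge\lim_{T} g(T,c'Te,v)=M(v)$, which is impossible for any $v$ with $M(v)>0$. Hence no positive wave exists with $c<c^*(e)$, i.e. $c\ge c^*(e)$.

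The main obstacle is the second step: quantifying that $g$ genuinely fills the region behind speed $c^*(e)$. The comparison and the final step are routine, but the spreading of $g$ requires the full Hamilton--Jacobi machinery of \Cref{sec:HJ} --- the convergence of \Cref{thm:HJlimit}, the propagation of the nullset at speed $c^*(e)$ in \Cref{prop:nullsetfront}, and the lower bound $g^\eps\to M$ inside $\{\psi^0=0\}$ from \Cref{prop:zones} --- precisely because the pulled-front speed cannot be captured here by a first-order (Rouch\'e) condition at $\lambda^*(e)$. A secondary technical point is the velocity-uniformity of the lower bound as $z\to-\infty$, which is dealt with above by restricting to a good velocity set $V'$ via Egorov and by using that the Hamilton--Jacobi initial datum only sees $\min_{v}\varphi_0$.
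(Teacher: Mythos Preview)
Your proof is correct and follows essentially the same route as the paper: place a slab-type sub-initial datum below the wave, apply the comparison principle, and use the Hamilton--Jacobi spreading results (\Cref{thm:HJlimit}, \Cref{prop:nullsetfront}, \Cref{prop:zones}) to show the sub-solution reaches $M$ at speed $c^*(e)$, which forces $c\ge c^*(e)$. The only substantive variation is that you invoke Egorov to restrict to a velocity set $V'$ and thereby avoid assuming that $\tilde f(z,\cdot)\to M$ \emph{uniformly} in $v$, whereas the paper simply asserts this uniformity and takes $g_0=\gamma M(v)\mathbf 1_{\text{slab}}(x)$ on all of $V$; your version is a harmless strengthening, since the limiting initial phase only sees $\min_v\varphi_0$.
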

\begin{proof}[{\bf Proof of \Cref{lem:minspeed}}]
\NC{Let $f$ be such a travelling wave solution with initial data $\tilde f(x,v)$, \em i.e \em $f(t,x,v)=\tilde{f}(x\cdot e - c t,v)$. After \Cref{prop:zones}, we deduce that $f^\eps(t,x,v) = \tilde{f}\left(\frac{1}{\eps}\left( x\cdot e - c t \right),v\right)$} satisfies $\lim_{\eps \to 0} f^\eps = M$ on $x\cdot e - c t < 0$ and $\lim_{\eps \to 0} f^\eps = 0$ on $x\cdot e - c t > 0$.
Take $0<\gamma < 1$ and define $g(x,v) = \gamma M(v) \textbf{1}_{[-1,1]\times \R^{n-1}}(x)$ and $g^{\eps}(x,v)=g(x/\eps,v)$. We have
\begin{equation*}
\psi^\eps(x) = - \eps \ln(g(x/\eps,v)/M) = \left\lbrace\begin{array}{lcl}
- \eps \ln(\gamma) & x \in [-\eps,\eps] e_0 + e_0^\bot\\
+ \infty & \text{ else}\\
\end{array}\right.. 
\end{equation*}
Since $\lim_{z\to -\infty} \tilde f(z,v)=M$ uniformly in $v \in V$, one can shift the profile sufficiently enough so that $M \geq f \geq g \geq 0$. Thus, the comparison principle (see \cite{bouin_propagation_2015}, Proposition 2.2 for a proof) yields that $f^\eps \geq g^\eps$. Passing to the limit $\eps \to 0$, and recalling \Cref{thm:HJlimit}, \Cref{prop:zones} and \Cref{prop:nullsetfront}, we deduce that 
\begin{equation*}
\left( e_0^\bot + c^*(e_0) t e_0 \right) \cdot e_0 - ct \leq 0,
\end{equation*}
from which the result follows.
\end{proof}

From the Hamilton-Jacobi formalism, we may also deduce the following. 

\begin{proof}[{\bf Proof of Proposition \ref{prop:spreadingplanar}}]
We start by proving \eqref{eq:frontplanar}. For this, we use the the super-solution naturally provided by the linearized problem. We have 
\begin{equation*}
f(t,x,v) \leq \min\lbrace M(v) , e^{-\lambda^*(e_0)\left(x\cdot e_0 - c^*(e_0) t\right)} F_{\lambda^*(e_0),e_0}(v) \rbrace
\end{equation*}
As a consequence,
\begin{equation*}
\rho(t,x) \leq \min\lbrace 1 , e^{-\lambda^*(e_0)\left(x\cdot e_0 - c^*(e_0) t\right)} \rbrace,
\end{equation*}
and thus one has $\lim_{t \to +\infty} \sup_{x\cdot e_0 > ct} \rho(t,x) = 0$.

For \eqref{eq:backplanar}, we use the Hamilton-Jacobi results in the following way. We first notice that since the initial data is invariant under any translation in $e_0^\bot$, and the  the equation \eqref{eq:varHJ} invariant by translation, the solution $f(t,x,v)$ depends only on $x\cdot e_0$. That is $f(t,x,v) = f(t,(x \cdot e_0)e_0,v) = \tilde{f}(t,x \cdot e_0,v)$. For any $c < c^*(e_0)$, recalling \Cref{thm:HJlimit}, \Cref{prop:zones} and \Cref{prop:nullsetfront}, we have
\begin{equation*}
\lim_{t \to \infty} f(t,e_0^\bot + c t e_0,v) = \lim_{t \to \infty} \tilde f(t,ct,v) = \lim_{\eps \to 0} \tilde f^\eps(1,c,v) =  M(v),
\end{equation*} 
since $c < c^*(e_0)$.
\end{proof}

\subsection{Proof of \Cref{prop:spreadingbounded} : spreading of a compactly supported initial data}

We finally prove \Cref{prop:spreadingbounded}. The spreading result \eqref{eq:frontcompact} goes as for the Fisher-KPP equation in an heterogeneous media \cite{berestycki_spreading_2012}. It can be found by using the super solution 
\begin{equation*}
\overline{f}(t,x,v) = \inf_{e \in \mathbb{S}^{n-1}} e^{-\lambda^*(e)\left(x\cdot e - c^*(
e) t\right)} Q_{\lambda^*(e) e}(v)
\end{equation*}
By the comparison principle, and since the initial data is compactly supported, the function $\overline{f}$ lies above $f$ (multiplying $\overline{f}$ by a big constant if necessary). We deduce that for any given $e_0 \in \mathbb{S}^{n-1}$, and any fixed $x \in \R^n$,
\begin{equation*}
f(t,x+c e_0 t,v) \leq \inf_{e \in \mathbb{S}^{n-1}} e^{-\lambda^*(e)\left((x+c e_0 t)\cdot e - c^*(
e) t\right)} Q_{\lambda^*(e) e}(v) = \inf_{e \in \mathbb{S}^{n-1}} e^{-\lambda^*(e)\left(x\cdot e + c e_0\cdot e t - c^*(
e) t\right)} Q_{\lambda^*(e) e}(v).
\end{equation*}

Moreover, the domain of $Q_{\lambda^*(e)e}$ contains $V\setminus \left\{v_{max}e\right\}$ and $Q_{\lambda^*(e)e}$ is bounded on all compact sets of $V\setminus \left\{v_{max}e\right\}$. Hence, for fixed $v\in V$, we can choose $e\in \mathbb{S}^{n-1}$ such that $v \in V\setminus \left\{v_{max}e\right\}$. Then, as soon as $c > w^*(e_0)$, we have $c (e \cdot e_0) > c^*(e)$ for any $e$, and thus $\lim_{t \to \infty} f(t,x+c e_0 t,v) = 0$.

Moreover, we shall prove \eqref{eq:backcompact} as follows. For any $c < c^*(e_0)$, recalling \Cref{thm:HJlimit}, \Cref{prop:zones} and \Cref{prop:nullsetfreidlin_functional_1985}, we have
\begin{equation*}
\lim_{t \to \infty} f(t,c t e_0,v) = \lim_{t \to \infty} \tilde f(t,ct,v) = \lim_{\eps \to 0} \tilde f^\eps(1,ce_0,v) =  M(v),
\end{equation*} 
since $c < w^*(e_0)$.\begin{flushright}$\square$\end{flushright}

\bibliographystyle{plain}
\bibliography{bibliothese}


\end{document}